\definecolor{flesh}{rgb}{.90,.60,.50}
\definecolor{night}{rgb}{.05,0,.05}
\definecolor{gold}{rgb}{.80,.75,.10}
\definecolor{wine}{rgb}{1.00,0,.10}
\definecolor{dimmed}{gray}{0.9}
\definecolor{blau1}{rgb}{0,0.3,0.65}
\definecolor{red1}{rgb}{.5,.0,.0}
\definecolor{green1}{rgb}{.0,.5,.0}
\definecolor{water}{rgb}{.0,.9,0.7}
\definecolor{panelbackground}{rgb}{0,0.39,0.61}
\definecolor{blue1}{rgb}{0.367,0.679,0.839}
\definecolor{blue2}{rgb}{0.000,0.391,0.605}
\definecolor{blue3}{rgb}{0.4,0.8,0.9}
\def\en t{{{\rm Z}\mkern-5.5mu{\rm Z}}}
\newtheorem{theorem}{Theorem}[section]
\newtheorem{corollary}[theorem]{Corollary}
\newtheorem{lemma}[theorem]{Lemma}
\newtheorem{proposition}[theorem]{Proposition}
\newtheorem{remark}[theorem]{Remark}
\newenvironment{proof}[1][Proof]{\textbf{#1.} }{\ \rule{0.5em}{0.5em}}
\begin{document}

\title{The Navier-Stokes-$\alpha$ equation via forward-backward stochastic differential systems \footnote{Address:  
Academy of Mathematics and Systems Science, Chinese Academy of Sciences, No. 55, 
Zhongguancun East Road, Beijing, 100190, China; Grupo de F\'isica Matem\'atica da Universidade de Lisboa and Dep. de Matem\'atica Instituto Superior T\'ecnico (Universidade de Lisboa), Av. Rovisco Pais, 1049-001 Lisboa, Portugal. E-mail: liuguoping@amss.ac.cn.
}}

\author{Guoping Liu}


%
%

\maketitle
\tableofcontents

%
%

\section{Introduction}\label{section1}
\setcounter{equation}0

The two-dimensional Navier-Stokes-$\alpha$ equation (c.f. \cite{Foias-Holm-Titi:01}) is given by
\begin{eqnarray}\label{eq1.1}
&&\partial_t(u-\alpha^2\Delta u)-\nu \Delta(u-\alpha^2\Delta u)+u\cdot \nabla(u-\alpha^2\Delta u)=\alpha^2\nabla u^T\Delta u-\nabla p, \\\nonumber
&&\nabla\cdot u=0,
\end{eqnarray}
where $u=(u^1, u^2)$ is the velocity field, $\nu$ the viscosity constant and $p$ the pressure.

This equation was studied by various authors. Without pretending to be exhaustive, let us mention
\cite{Bjorland-Schonbek:08}(Theorem 4.2), where Bjorland and Schonbek proved the existence and uniqueness of weak solutions of (n=2,3,4) dimensional viscous Camass-Holm equation $(\alpha=1)$
on  open bounded sets  or  in $\mathbb R^n$ in various Sobolev spaces.
In  \cite{Ilyin-Titi:03} (pages 754-756),  Ilyin and Titi proved that if $\phi_0 \in H=L^2 \bigcap \{\int \phi dx=0\}$, the 2D viscous vorticity  Camass-Holm equation 
on the torus
$$\partial_t \phi-\nu\Delta \phi+u. \nabla \phi=0, \phi=\omega-\Delta \omega, \omega= \operatorname{rot} u$$
has a unique solution $\phi \in C([0,T];H)\bigcap L^2([0,T]; H^1)$.

Existence and uniqueness of solutions for the viscous Camassa-Holm equation on periodic domains in three dimensions was proved  in \cite{Foias-Holm-Titi:02} using the Galerkin method (Theorem 3); a general existence and uniqueness theorem in three dimensions is provided in \cite{Marsden-Shkoller:01} using a fixed point argument (Theorem 5.2).

In this paper we study Navies-Stokes-alpha equations using probabilistic methods. More precisely we use the representation of the p.d.e. solutions by forward-backward stochastic differential equations, which was presented in \cite{Cruzeiro-Shamarova:09} for the Navier-Stokes case. The forward-backward stochastic systems in question are infinite dimensional and, in order to solve them, one needs to have good estimates of the operators involved. These estimates depend on the underlying spaces, dimensions, etc, so that one cannot apply a "general theory", but instead have to work carefully each case.

Here we consider two different cases, namely  the periodic $2$-dimensional Navier-Stokes-alpha equation using the corresponding vorticity model,
as in \cite{Cruzeiro-Qian:14}, and the  Navier-Stokes-alpha equation in the whole space $\mathbb R^d$. 

For the periodic $2$-dimensional case, we first derive our problem in Section \ref{section2}. After that we show the existence of bounded solutions using the associated forward-backward stochastic differential equation in Section \ref{section3}.
For the d dimensional $(d\geq 3)$ situation our methods were inspired by \cite{Chen-Cruzeiro-Qian:13}.
Letting $m=u-\alpha^2\Delta u$, in section \ref{section4}, we study the local existence and uniqueness of the solution to d-dimensional $(d\geq 3)$ Navier-Stokes-alpha equation 
\begin{eqnarray}\label{eq1.2}
&&\partial_t m-\nu \Delta m +u\cdot\nabla m=-\nabla p+\alpha^2\nabla u^T\Delta u, \\\nonumber
&&\nabla\cdot m(t)=0,~~~m(0)=m_0,~~~t\in[0,T],
\end{eqnarray}
where $u:[0,T]\times\mathbb{R}^d\rightarrow \mathbb{R}^d$ is the velocity field,
$\nu>0$ the viscosity constant and 
$p:[0,T]\times\mathbb{R}^d\rightarrow \mathbb{R}$ the pressure. From the above equation, we can deduce that $p$ satisfies the following equation:
\begin{eqnarray}\label{eq1.3}
\Delta p&=&-\sum_{i,j=1}^d [\partial_i u^j\partial_j (u^i-\alpha^2\Delta u^i)-\alpha^2\partial_{ii}^2u^j\Delta u^j-\alpha^2\partial_iu^j\Delta\partial_iu^j]\\\nonumber
&=&-\sum_{i=1}^d\partial_i\sum_{j=1}^d [u^j\partial_j(u^i-\alpha^2\Delta u^i)-\alpha^2\partial_iu^j\Delta u^j],~~~\forall t\in [0,T],
\end{eqnarray}
where $\partial_i u^j, 1\leq i,j\leq d$ denotes the partial derivative with respect to the 
$i$-th variable for the $j$-th component of $u$. Both in the proof of the 2-dimensional and d-dimensional case, we have used the fixed point theorem.

\section{Formulation of the two-dimensional problem}\label{section2}
\setcounter{equation}0

In this section we derive the backward stochastic differential equation associated with the two-dimensional Navier-Stokes-$\alpha$ equation (\ref{eq1.1}).

Differentiating equation (\ref{eq1.1}), we get
\begin{eqnarray*}
&&\partial_t(\partial_1u^2 -\alpha^2\Delta\partial_1 u^2)+u^k\partial_k(\partial_1u^2-\alpha^2\Delta \partial_1u^2)
+ \partial_1 u^k\partial_k(u^2-\alpha^2\Delta u^2)\\
 &=&\nu\Delta(\partial_1u^2-\alpha^2\Delta\partial_1 u^2)+\alpha^2\partial_1\partial_2u^k\Delta u^k+\alpha^2\partial_2u^k\Delta\partial_1 u^k-\partial_1\partial_2 p.
\end{eqnarray*}
Therefore, we have
\begin{eqnarray*}
&&\partial_t[(\partial_1u^2-\partial_2u^1)-\alpha^2\Delta(\partial_1 u^2-\partial_2u^1)]
+u^k\partial_k[(\partial_1u^2-\partial_2u^1)-\alpha^2\Delta(\partial_1 u^2-\partial_2u^1)]\\
&&+ [\partial_1 u^k\partial_k(u^2-\alpha^2\Delta u^2)-\partial_2 u^k\partial_k(u^1-\alpha^2\Delta u^1)]\\
&=&\nu\Delta[(\partial_1u^2-\partial_2u^1)-\alpha^2\Delta(\partial_1 u^2-\partial_2u^1)]
+\alpha^2[\partial_2u^k\Delta\partial_1 u^k-\partial_1u^k\Delta\partial_2 u^k].
\end{eqnarray*}
Since $\nabla\cdot u=0$, by simple calculation, we have
$$-[\partial_1u^k\partial_k(u^2-\alpha^2\Delta u^2)-\partial_2u^k\partial_k(u^1-\alpha^2\Delta u^1)]
+\alpha^2[\partial_2u^k\Delta \partial_1u^k-\partial_1u^k\Delta \partial_2u^k]=0.$$
We denote by
\begin{eqnarray}\label{eq2.1}
\omega=\frac{\partial u^2}{\partial x^1}-\frac{\partial u^1}{\partial x^2}
\end{eqnarray}
the vorticity of $u$, then $\omega$ satisfies the evolution equation 
\begin{eqnarray}\label{eq2.2}
\partial_t(\omega-\alpha^2\Delta \omega)-\nu\Delta (\omega-\alpha^2\Delta \omega)+u\cdot\nabla (\omega-\alpha^2\Delta \omega)=0. 
\end{eqnarray}
Equation (\ref{eq2.2}) is the vorticity equation which is equivalent to the Navier-Stokes-$\alpha$ equation. 

Let $B=(B^1,B^2)$ be a standard Brownian motion on a complete probability space $(\Omega,\mathcal{F},P)$, and define
the random processes (we suppress the probability parameter in the notations):
\begin{eqnarray*}
&&Y(t,x)=(\omega-\alpha^2\Delta\omega)(T-t,x+\sqrt{2\nu}B_t)\\
&&Z(t,x)=\nabla(\omega-\alpha^2\Delta\omega)(T-t,x+\sqrt{2\nu}B_t)
\end{eqnarray*}
for $(t,x)\in [0,T]\times\mathbb{R}^2$.
Suppose that $u(0,x)=\varphi(x)$ is a smooth vector field with period one, that is, $\varphi(x+e_i)=\varphi(x)$ for all $x\in \mathbb{R}^2$, where $e_1=(1,0)$ and $e_2=(0,1)$ are the standard basis in $\mathbb{R}^2$. Then the unique solution $(u,p)$ of equation (\ref{eq1.1}) is smooth and periodic in space variables (with period one).
Let $\psi=(\frac{\partial\varphi^2}{\partial x^1}-\frac{\partial\varphi^1}{\partial x^2})-\alpha^2\Delta(\frac{\partial\varphi^2}{\partial x^1}-\frac{\partial\varphi^1}{\partial x^2})$ and $\xi(x)=\psi(x+\sqrt{2\nu}B_T)$.
If $\psi$ is smooth, the random variable $\xi$ is also smooth and periodic in $x$. By It\^o's formula
\begin{eqnarray}\label{eq2.3}
\xi(x)-Y(t,x)&=&\sqrt{2\nu}\int_t^T<\nabla(\omega-\alpha^2\Delta\omega)(T-s,x+\sqrt{2\nu}B_s),dB_s>\\ \nonumber
&&+\int_t^T[-\frac{\partial}{\partial s}(\omega-\alpha^2\Delta\omega)+\nu\Delta(\omega-\alpha^2\Delta\omega)]
(T-s,x+\sqrt{2\nu}B_s)ds.
\end{eqnarray}
Using the  vorticity equation (\ref{eq2.2}), we  obtain
\begin{eqnarray}\label{eq2.4}
\xi(x)-Y(t,x)=\sqrt{2\nu}\int_t^T <Z(s,x),dB_s>+\int_t^T<Z(s,x),X(s,x)>ds
\end{eqnarray}
where
$$X(t,x):=u(T-t,x+\sqrt{2\nu}B_t)$$
which is continuous in t, smooth  and periodic in x.  In order to write $X(t,x)$ in terms of 
$Y$ and $Z$ we proceed as follows.

Due to the divergence-free condition, the relationship between the vorticity $\omega$
and the associated vector field $u$ is determined by the Poisson equations 
\begin{eqnarray}\label{eq2.5}
\begin{cases}
    $$\Delta u^1=-\frac{\partial\omega}{\partial x^2}$$&\mbox{}\\
    $$\Delta u^2=\frac{\partial\omega}{\partial x^1}.$$ &\mbox{}
\end{cases} 
\end{eqnarray}
We consider the linear operators $K_i: \omega\rightarrow u^i$ (where  $i=1,2$)
and $K=(K_1,K_2)$ defined by solving the Poisson equations (\ref{eq2.5}), where $\omega$ is a real function with period one and mean zero (i.e. $\int_{[0,1)^2} \omega(t,x)dx=0,~\forall t\geq 0$).

Denote by $\mathbb{T}^2$  the two-dimensional torus equipped with the standard metric and the Lebesque measure. We 
 identify tensor fields in $\mathbb{R}^2$ with period one  with the corresponding tensor fields on $\mathbb{T}^2$ in the canonical way.
In particular we have
$$L^2(\mathbb{T}^2)=\{f\in L_{loc}^2(\mathbb{R}^2): f(\cdot+e_i)=f(\cdot) ~~for ~~i=1,2\}\cap L^2([0,1)^2).$$
If $f\in L^2(\mathbb{T}^2)$ then 
\begin{eqnarray}\label{eq2.6}
f(x)=\sum_{k\in \mathbb{Z}^2}e^{2\pi i<k,x>}\hat{f}(k)
\end{eqnarray}
where $ \hat{f}(k)=\int_{[0,1)^2}e^{-2\pi i<k,y>}f(y)dy, ~~k\in\mathbb{Z}^2$
is the Fourier transform of $f$.
We recall Green's formula
for the Poisson equation 
\begin{eqnarray}\label{eq2.7}
\Delta g=-f ~~in ~~\mathbb{T}^2,~~~~~~\int_{\mathbb{T}^2}g(y)dy=0,
\end{eqnarray}
where $\int_{\mathbb{T}^2}f(y)dy=0$ and $f\in L^2(\mathbb{T}^2)$. The unique solution of problem (\ref{eq2.7}) is given by 
\begin{eqnarray}\label{eq2.8}
g(x)=\sum_{k\in\mathbb{Z}^2,k\neq 0}\frac{e^{2\pi i<k,x>}}{4\pi|k|^2}\hat{f}(k).
\end{eqnarray}
Applying Green's formula to the vorticity $\omega$, we have 
\begin{eqnarray}
u (t,x)=(K^1 , K^2 ) (\omega (t,\cdot ))(x)= \frac{i}{2} \sum_{k=(k_1,k_2)\in \mathbb{Z}^2, k\neq 0}\frac{1}{|k|^2}(k_2,-k_1)e^{2\pi i<k,x>}
\widehat{\omega(t,\cdot)}(k).
\end{eqnarray}
On the other hand
\begin{eqnarray*}
\widehat{(I-\alpha^2\Delta)^{-1}Y}(t,k)&=&\int_{[0,1)^2}e^{-2\pi i<k,y>}\omega(T-t, y+\sqrt{2\nu}B_t)dy\\
&=&e^{2\pi i<k,\sqrt{2\nu}B_t>}\int_{[0,1)^2+\sqrt{2\nu}B_t}e^{-2\pi i<k,y>}\omega(T-t, y)dy
\\
&=&e^{2\pi i<k,\sqrt{2\nu}B_t>}\int_{[0,1)^2}e^{-2\pi i<k,y>}\omega(T-t, y)dy\\
&=&e^{2\pi i<k,\sqrt{2\nu}B_t>}\widehat{\omega(\cdot)}(T-t,k).
\end{eqnarray*}
Therefore, we obtain 
\begin{eqnarray}\label{eq2.10}
X(t,x)=\frac{i}{2}\sum_{k=(k_1, k_2)\in \mathbb{Z}^2, k\neq 0}\frac{1} {|k|^2}(k_2 ,-k_1 )e^{2\pi i<k,x>}
\widehat{(I-\alpha^2\Delta)^{-1}Y}(t,k)
\end{eqnarray}
and we have
\begin{eqnarray}\label{eq2.11}
X^j(t,x)=K_j((I-\alpha^2\Delta)^{-1}Y(t,\cdot))(x)\triangleq \widetilde{K}_j^\alpha(Y(t,\cdot))(x) ~~~~~\forall x\in \mathbb{R}^2, j=1,2.
\end{eqnarray}
From (\ref{eq2.4}), we conclude that 
\begin{eqnarray}\label{eq2.12}
\xi(x)-Y(t,x)=\sqrt{2\nu}\int_t^T<Z(s,x),dB_s>+\int_t^T<Z(s,x),\widetilde{K}^\alpha(Y(s,\cdot))(x)>ds
\end{eqnarray}
with $x\in \mathbb{R}^2$.

We have, as in \cite{Cruzeiro-Qian:14}, transformed the initial-valued vorticity equation (\ref{eq2.2}) in a stochastic backward system (\ref{eq2.12}) with terminal value,
that can be written in its differential form as:
\begin{eqnarray}\label{eq2.13}
dY=<Z,\tilde{K}^\alpha(Y)>dt+\sqrt{2\nu}<Z,dB>,~~~~~Y_T=\xi.
\end{eqnarray}

\section{Existence and uniqueness results on  two-dimensional torus}\label{section3}
\setcounter{equation}0

Let $\mathcal{F}_t^0=\sigma\{B_s:s\leq t\}$ and $(\mathcal{F}_t)_{t\geq 0}$ be the completed continuous filtration associated with $(\mathcal{F}_t^0)_{t\geq 0}.$
We assume that the terminal value $\xi$ is a bounded random function on $\Omega \times \mathbb{T}^2$, $ess\sup\limits_{ \Omega \times \mathbb{T}^2} |\xi | \leq C_1$, which is 
$\mathcal{F}_T\otimes \mathcal{B}(\mathbb{T}^2)$ measurable and that $\int_{\mathbb{T}^2} \xi (y)dy=0$ a.e..

We state our main result.
\begin{theorem}\label{thm 3.1}
Under the above assumptions on the terminal value $\xi$, there exists a unique solution  $(Y, Z)$ of the BSDE (\ref{eq2.13}) such that
 $ess\sup\limits_{[0,T]\times \Omega} \| Y_t  \| < +\infty $ and,
for almost all $x\in \mathbb{T}^2$, the  It\^o's integral $\int_0^{\cdot} <Z_s (x),dB_s >$ is a BMO martingale, and 
\begin{eqnarray}\label{eq3.1}
ess\sup\limits_{[0,T]\times \Omega}\mathbb{E}\{\int_t^T \|Z_s\|^2|\mathcal{F}_t\}<+\infty
\end{eqnarray}
where $\|\cdot\|$ denotes the $L^2$-norm on $\mathbb{T}^2$.

Furthermore, if the random variable $\xi$ is of the form $\xi(x) =\psi (x +\sqrt{2\nu} B_T )$, then 
$Y(t,x)=(\omega-\alpha^2\Delta\omega)(T-t,x+\sqrt{2\nu}B_t)$
with $\omega$ solving the vorticity equation (\ref{eq2.2}).
\end{theorem}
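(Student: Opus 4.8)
The plan is to solve the BSDE (\ref{eq2.13}) by a fixed-point argument built on top of a linearized problem, in the spirit of the Navier--Stokes treatment of \cite{Cruzeiro-Qian:14}, the decisive new input being a smoothing estimate for the regularized Biot--Savart operator $\widetilde K^\alpha$. First I would freeze the transport coefficient: given an adapted, bounded velocity field $X$, consider the \emph{linear} BSDE $dY=\langle Z,X\rangle\,dt+\sqrt{2\nu}\langle Z,dB\rangle$, $Y_T=\xi$. Since $X$ is bounded, $\theta:=X/\sqrt{2\nu}$ satisfies Novikov's condition, so the Dol\'eans--Dade exponential $\mathcal{E}(-\int\theta\cdot dB)$ defines an equivalent measure $\widetilde P$ under which $\widetilde B_t:=B_t+\frac1{\sqrt{2\nu}}\int_0^tX_s\,ds$ is a Brownian motion and $Y$ becomes a $\widetilde P$-martingale. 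Hence $Y_t=\mathbb{E}^{\widetilde P}[\xi\mid\mathcal{F}_t]$, with $Z$ read off from the martingale representation of $\xi$ under $\widetilde P$. This immediately yields the pointwise bound $|Y_t(x)|\le\|\xi\|_\infty\le C_1$ for a.e.\ $x$, hence $\sup_{t}\|Y_t\|\le C_1$.

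The a priori estimate on $Z$ comes from It\^o's formula applied to $Y(t,x)^2$: taking conditional expectations and using $|Y|\le C_1$, $|X|\le M$ together with Young's inequality on the cross term $2Y\langle Z,X\rangle$, one obtains both the pointwise BMO bound $\nu\,\mathbb{E}[\int_t^T|Z_s(x)|^2\,ds\mid\mathcal{F}_t]\le C_1^2(1+M^2T/\nu)$ and, after integrating in $x$, the space--$L^2$ version $\nu\,\mathbb{E}[\int_t^T\|Z_s\|^2\,ds\mid\mathcal{F}_t]\le C_1^2(1+M^2T/\nu)$. This is exactly (\ref{eq3.1}) together with the BMO martingale property, once $M$ is fixed.

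To close the loop I need $X=\widetilde K^\alpha(Y)$ to be bounded whenever $Y$ is, and this is the step where the $\alpha$-regularization is essential. From (\ref{eq2.10})--(\ref{eq2.11}) the Fourier multiplier of $\widetilde K^\alpha$ has modulus comparable to $|k|^{-1}(1+4\pi^2\alpha^2|k|^2)^{-1}\sim|k|^{-3}$ for large $k$; since $\sum_{k\neq0}|k|^{-6}<\infty$ on $\mathbb{Z}^2$, Cauchy--Schwarz in Fourier gives $\|\widetilde K^\alpha f\|_\infty\le C_\alpha\|f\|$, i.e.\ $\widetilde K^\alpha:L^2(\mathbb{T}^2)\to L^\infty(\mathbb{T}^2)$ is bounded (the bare Biot--Savart multiplier $\sim|k|^{-1}$ is \emph{not} square-summable in $2$D, so this is precisely the gain furnished by the $\alpha$-model). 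Consequently $M:=C_\alpha C_1$, and the map $\Phi:\;Y\mapsto Y'$, where $Y'$ solves the linear BSDE with coefficient $X=\widetilde K^\alpha(Y)$, sends the ball $\{\sup_{t,\Omega}\|Y_t\|_\infty\le C_1,\ \int_{\mathbb{T}^2}Y_t\,dx=0\}$ into itself; the mean-zero constraint is preserved because $X$ is divergence-free, so the zeroth Fourier mode of the drift, $\int_{\mathbb{T}^2}\langle Z,X\rangle\,dx=-\int_{\mathbb{T}^2}(\cdots)\,\nabla\cdot X\,dx$, vanishes.

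Finally, for the contraction I would subtract the linear BSDEs associated with two inputs $\bar Y_1,\bar Y_2$, write $\langle Z_1',X_1\rangle-\langle Z_2',X_2\rangle=\langle \delta Z',X_1\rangle+\langle Z_2',\delta X\rangle$ with $\delta X=\widetilde K^\alpha(\bar Y_1-\bar Y_2)$, and remove the $\langle\delta Z',X_1\rangle$ term by Girsanov with $X_1$. The terminal value of the difference being zero, this gives $\delta Y'_t=-\mathbb{E}^{\widetilde P_1}[\int_t^T\langle Z_2',\delta X\rangle\,ds\mid\mathcal{F}_t]$, and Cauchy--Schwarz together with the BMO bound on $Z_2'$ (still valid under $\widetilde P_1$ by the energy inequality for BMO martingales) and $\|\delta X\|_\infty\le C_\alpha\sup_s\|\bar Y_1(s)-\bar Y_2(s)\|$ yields a contraction factor of order $C_\alpha\sqrt{\delta}\,(1+M^2\delta/\nu)^{1/2}$ on a subinterval $[T-\delta,T]$. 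Choosing $\delta$ small makes $\Phi$ a contraction there; since all a priori bounds are uniform in the time origin, the construction is iterated on $[T-2\delta,T-\delta],\dots$ to cover $[0,T]$, producing a unique fixed point, hence a unique $(Y,Z)$ with the stated bounds (any solution in this class is a fixed point of $\Phi$, so uniqueness is automatic). The hardest part is exactly this contraction-plus-bootstrap while keeping the BMO norms controlled; the $L^2\to L^\infty$ estimate for $\widetilde K^\alpha$ is the enabling ingredient. For the last assertion, when $\xi(x)=\psi(x+\sqrt{2\nu}B_T)$ the computation (\ref{eq2.3})--(\ref{eq2.12}) shows that $Y(t,x)=(\omega-\alpha^2\Delta\omega)(T-t,x+\sqrt{2\nu}B_t)$ and $Z(t,x)=\nabla(\omega-\alpha^2\Delta\omega)(T-t,x+\sqrt{2\nu}B_t)$ solve (\ref{eq2.13}) and, by smoothness and periodicity of $\omega$, satisfy the bounds; uniqueness then forces this to be the solution just constructed.
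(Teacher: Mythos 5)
Your proposal is correct and shares the paper's overall architecture --- freeze the drift, solve the linear BSDE by Girsanov to get the maximum principle $|Y|\le C_1$, derive the BMO bound on $Z$ from It\^o's formula applied to $|Y|^2$, and close with a Banach fixed point --- but two of your technical choices genuinely differ from the paper's. First, where you exploit the $\alpha$-smoothing through an $L^2\to L^\infty$ bound $\|\widetilde K^\alpha f\|_\infty\le C_\alpha\|f\|$ (via square-summability of the $|k|^{-3}$ multiplier on $\mathbb Z^2$, a correct and rather clean observation), the paper only uses the $L^2$-type estimate $\|\widetilde K^\alpha_j(f)\|\le C(\alpha)\|f\|_{-2,2}$ from (\ref{eq3.4}) and carries all estimates in the $L^2(\mathbb T^2)$-norm; your stronger bound is what lets you run Girsanov with a genuinely bounded drift and measure the contraction pointwise in $x$. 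Second, your contraction is local in time (factor $\sim C_\alpha\sqrt\delta$ on $[T-\delta,T]$, obtained by a second Girsanov transform that kills the $\langle\delta Z',X_1\rangle$ term) followed by a backward bootstrap, whereas the paper avoids time-splitting altogether by introducing the exponential weight $e^{\beta t}$ and choosing $\beta$ large so that $\mathcal L$ contracts on all of $[0,T]$ in the norm $\sup_{\Omega\times[0,T]}\|Y^{\alpha,\beta}_t\|+\|Z^{\alpha,\beta}\|_{BMO}$ of (\ref{eq3.15}); the weighted norm buys a one-shot global contraction and keeps explicit control of $\delta Z$ in BMO, while your route buys a more elementary estimate at the cost of invoking Kazamaki's invariance of BMO under the change of measure and of re-verifying the a priori bounds at each gluing time. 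Two small points to tidy up: your argument that the zero Fourier mode of the drift vanishes implicitly uses $Z=\nabla_x Y$, which is not automatic for the density from martingale representation --- but this is harmless since the sum in (\ref{eq2.10}) excludes $k=0$, so $\widetilde K^\alpha$ never sees the mean; and the claim that the $\alpha$-regularization is ``essential'' should be read as essential for your $L^\infty$ route only, since the $\alpha=0$ case is handled in \cite{Cruzeiro-Qian:14} by the $L^2$ method. The treatment of the final (Feynman--Kac) assertion matches the paper, which likewise dispatches it as a standard FBSDE fact.
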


In the rest of this section we prove Theorem \ref{thm 3.1},  following the lines of \cite{Cruzeiro-Qian:14} adapted to our different operators.

\subsection{Preliminaries}
If  $k\in\mathbb{Z}_{+}$ and $q\geq 1$, we consider the Sobolev space 
$$W^{k,q}(\mathbb{T}^2)=\{f: \partial^\alpha f\in L_{loc}^q(\mathbb{R}^2)\cap L^q([0,1)^2)  ~~for~~|\alpha|\leq k 
 \\~~and~~f(\cdot+e_i)=f(\cdot)  ~~for~~i=1,2 \}$$
together with the Sobolev norm 
$$\|f\|_{k,q}=(\sum_{\alpha\in\mathbb{Z}^2,|\alpha|\leq k}\|\partial^\alpha f\|_q^q)^{\frac{1}{q}}$$
where $\|\cdot\|_q$ is the $L^q$-norm over $\mathbb{T}^2$.
If $q=2$ then we use $\|\cdot\|$ instead of $\|\cdot\|_2$ for simplicity.

If $f\in L^2(\mathbb{T}^2)$ such that $\int_{[0,1)^2}f=0$, then $g_j=K_j(f)$ (where the operator $K$ was defined in the previous section) are the unique solutions with period one of the Poisson equations 
\begin{eqnarray}\label{eq3.2}
\Delta g_1=-\frac{\partial f}{\partial x_2},~~~~\Delta g_2=\frac{\partial f}{\partial x_1}~~~~ 
on~~ \mathbb{T}^2.
\end{eqnarray}
such that $\int_{[0,1)^2}g_j=0$.
Based on this, we have
$$\int_{\mathbb{T}^2}|\nabla g_j|^2=-\int_{\mathbb{T}^2}g_j\Delta g_j
=\int_{\mathbb{T}^2} g_1\frac{\partial f}{\partial x_2}~~~~or~~~~-\int_{\mathbb{T}^2} g_2\frac{\partial f}{\partial x_1}$$
according to $j=1$ or $j=2$. Integration by parts together with Cauchy-Schwartz's inequality applied to the last integrals imply that
$$\int_{\mathbb{T}^2}|\nabla g_j|^2\leq \sqrt{\int_{\mathbb{T}^2}|\nabla g_j|^2} \sqrt{\int_{\mathbb{T}^2}|f|^2}$$
which yields 
\begin{eqnarray}\label{eq3.3}
\|\nabla K_j(f)\|\leq \|f\|, ~~~~j=1,2.
\end{eqnarray}
Let $\lambda_1>0$ be the spectral gap for the torus $\mathbb{T}^2$. Since 
$\int_{\mathbb{T}^2}K_j(f)=0$, according to the Poincar\'e inequality 
\begin{eqnarray}\label{eq3.4}
&&\|K_j(f)\|\leq \frac{1}{\sqrt{\lambda_1}}\|\nabla K_j(f)\|\leq \frac{1}{\sqrt{\lambda_1}}\|f\|,\\\nonumber
&&\|\widetilde{K}_j^\alpha(f)\|=\|K_j((I-\alpha^2\Delta)^{-1}f)\|\leq  \frac{1}{\sqrt{\lambda_1}}\|(I-\alpha^2\Delta)^{-1}f\|\leq C(\alpha)\|f\|_{-2,2}.
\end{eqnarray}
Therefore  there exists  a  constant $C_0(\alpha)>0$ (c.f. \cite{Agmon-Douglis:59,Agmon-Douglis:64,Morrey-66}) such that 
\begin{eqnarray}\label{eq3.5}
\|\widetilde{K}_j^\alpha(f)\|_{k,2} \leq C_0(\alpha)\|f\|_{k-3,2}
\end{eqnarray}
for  every $f\in W^{k-3,2}(\mathbb{T}^2)$ with $\int_{\mathbb{T}^2}f=0$, $k\in \mathbb{N}$.

Let us consider the following linear BSDE
\begin{eqnarray}\label{eq3.6}
&&dY(t,x)=<Z(t,x),h(t,x)>dt+<Z(t,x),dB_t>,\nonumber\\
&&Y(T,x)=\xi(x),
\end{eqnarray}
where $h\in \mathcal {O}\times \mathcal{B}(\mathbb{R}^2)$ is a given $\mathbb{T}^2$-valued  optional process such that, for each $(w,t)\in \Omega\times[0,T]$, $h(w,t,\cdot)\in C(\mathbb{T}^2)$ and
\begin{eqnarray}\label{eq3.7}
\mathbb{E}\int_0^T |h(t,x)|^2dt<\infty~~\forall x\in \mathbb{T}^2.
\end{eqnarray}
Let $ \xi \in L^\infty(\Omega\times \mathbb{T}^2)$ which is $\mathcal{F}_T$-measurable.

The linear equation (\ref{eq3.6}) may be solved for every $x\in \mathbb{T}^2$. More precisely, for each $x\in\mathbb{T}^2$, since we have (\ref{eq3.7}), we can define a probability $\mathbb{Q}^x$ on $\mathcal{F}_T$ by $\frac{d\mathbb{Q}^x}{d\mathbb{P}}=R(T,x)$, where  
$$R(t,x)=\exp[-\int_0^t<h(s,x),dB_s>-\frac{1}{2}\int_0^t|h(s,x)|^2ds].$$
If $(Y(\cdot,x),Z(\cdot,x))$ is the unique solution of (\ref{eq3.6}),  the Girsanov theorem shows that $Y(\cdot,x)$ must be a martingale under the new probability $\mathbb{Q}^x$; hence,
$$Y(t,x)=\mathbb{E}^{\mathbb{Q}^x}\{\xi(x)|\mathcal{F}_t\}$$
which implies that 
$$Y(t,x)=\mathbb{E}\{\frac{R(T,x)}{R(t,x)}\xi(x)|\mathcal{F}_t\}$$
for $(t,x)\in[0,T]\times\mathbb{T}^2$. 
Therefore, since $\xi$ is a bounded  $\mathcal{F}_T$-measurable random variable, and assuming that 
$h$ is a $C(\mathbb{T}^2)$-valued adapted stochastic process satisfying (\ref{eq3.7}), 
the unique solution to (\ref{eq3.6}) is given by 
\begin{eqnarray}\label{eq3.8}
Y(t,x)=\mathbb{E}\{\xi(x)e^{-\int_t^T<h(s,x),dB_s>-\frac{1}{2}\int_t^T|h(s,x)|^2ds}|\mathcal{F}_t\}
\end{eqnarray}
with $(t,x)\in[0,T]\times \mathbb{T}^2$.

\subsection{Proof of Theorem \ref{thm 3.1}}
Let $\mathcal{H}$ denote the set of bounded $\mathcal{P}\times \mathcal{B}(\mathbb{R}^2)$-predictable stochastic processes 
Y on $\Omega\times[0,T]\times \mathbb{T}^2$ that satisfy the following conditions:

(i) For every $x\in \mathbb{T}^2$, $Y(\cdot, x)$ is a continuous semimartingale (up to time $T$) on $(\Omega, \mathcal{F},\mathcal{F}_t, \mathbb{P})$ 
and $Y_T=\xi$;

(ii) If $M$ is the  martingale part  of $Y$,
$M(t,x)=\int_0^t<Z(t,x),dB_t>$
where $Z$ is $\mathcal{P}\times \mathcal{B}(\mathbb{R}^2)$-measurable, then 
$$ess\sup\limits_{[0,T]\times \Omega}\mathbb{E}\{\int_t^T \|Z_s\|^2|\mathcal{F}_t\}<+\infty.$$

Let $Y^\alpha\in\mathcal{H}$ and  define $\tilde{Y}^\alpha=\mathcal{L}(Y^\alpha)$ by solving the following linear BSDE
\begin{eqnarray}\label{eq3.9}
&&d\tilde{Y}^\alpha(t,x)=<\tilde{Z}^\alpha(t,x),\tilde{K}^\alpha(Y^\alpha(t,\cdot))(x)>dt+\sqrt{2\nu}<\tilde{Z}^\alpha(t,x),dB_t>,\\\nonumber
&&\tilde{Y}^\alpha(T,x)=\xi(x),
\end{eqnarray}
for every $x\in \mathbb{T}^2$. Then $\tilde{Y}^\alpha\in\mathcal{H}.$
Suppose $Y^\alpha\in\mathcal{H}$ is such that $\|Y^\alpha\|_\infty\leq C_1$, where
$\|Y^\alpha\|_\infty $ is the essential bound of $Y^\alpha$ on $\Omega \times [0,T]\times \mathbb T^2$. Define $\tilde{Y}^\alpha=\mathcal{L}(Y^\alpha)$ and $\tilde{Z}^\alpha$
the density process of the martingale part of $\tilde{Y}^\alpha$, that is,  $(\tilde{Y}^\alpha,\tilde{Z}^\alpha)$ is the solution of the linear BSDE (\ref{eq3.9}),
where $|\xi(w,t,x)|\leq C_1$. By the maximal principle stated in \cite{Cruzeiro-Qian:14} (Lemma 4.2), we have  $|\tilde{Y}^\alpha(w,t,x)|\leq C_1$.

According to It\^o's formula,
$$|\tilde{Y}_t^\alpha|^2=|\xi|^2-2\nu\int_t^T|\tilde{Z}^\alpha|^2ds -2\int_t^T \tilde{Y}^\alpha<\tilde{Z}^\alpha,\tilde{K}^\alpha(Y^\alpha)>ds-2\sqrt{2\nu}\int_t^T\tilde{Y}^\alpha<\tilde{Z}^\alpha,dB>.$$
Taking conditional expectations, we obtain
\begin{eqnarray*}
 |\tilde{Y}_t^\alpha|^2+2\nu\mathbb{E}^{\mathcal{F}_t}\int_t^T|\tilde{Z}^\alpha|^2ds&=&\mathbb{E}^{\mathcal{F}_t}|\xi|^2 -2\mathbb{E}^{\mathcal{F}_t}\int_t^T \tilde{Y}^\alpha<\tilde{Z}^\alpha,\tilde{K}^\alpha(Y^\alpha)>ds\\
 &\leq&C_1^2+2C_1\mathbb{E}^{\mathcal{F}_t}\int_t^T |<\tilde{Z}^\alpha,\tilde{K}^\alpha(Y^\alpha)>|ds.
\end{eqnarray*}
Integrating over $\mathbb{T}^2$, using Young's inequality and estimate (\ref{eq3.4}), we have 
\begin{eqnarray*}
\|\tilde{Y}_t^\alpha\|^2+2\nu\mathbb{E}^{\mathcal{F}_t}\int_t^T\|\tilde{Z}^\alpha\|^2ds
&\leq&C_1^2+2C_1\mathbb{E}^{\mathcal{F}_t}\int_t^T |<\tilde{Z}^\alpha,\tilde{K}^\alpha(Y^\alpha)>|ds\\
&\leq&C_1^2+2C_1\mathbb{E}^{\mathcal{F}_t}\int_t^T \|\tilde{Z}^\alpha\|\|\tilde{K}^\alpha(Y^\alpha)\|ds\\
&\leq&C_1^2+C_1\mathbb{E}^{\mathcal{F}_t}\int_t^T [\frac{1}{\varepsilon}\|\tilde{Z}^\alpha\|^2+\varepsilon\|\tilde{K}^\alpha(Y^\alpha)\|^2]ds\\
&\leq&C_1^2+\frac{C_1}{\varepsilon}\mathbb{E}^{\mathcal{F}_t}\int_t^T \|\tilde{Z}^\alpha\|^2 ds
+\varepsilon C_1C^2(\alpha)\mathbb{E}^{\mathcal{F}_t}\int_t^T \|Y^\alpha\|_{-2,2}^2 ds
\end{eqnarray*}
for every $\varepsilon>0$. Recall that 
$$\|\tilde{Z}^\alpha\|_{BMO}^2=ess\sup\limits_{[0,T]\times \Omega}\mathbb{E}^{\mathcal{F}_t}\int_t^T \|\tilde{Z}^\alpha\|^2 ds$$
and
$$\|Y^\alpha\|_{-2,2}^2\triangleq\int_{\mathbb{R}^2}(1+|\xi|^2)^{-2}|\widehat{Y^\alpha}(\xi)|^2d\xi,$$
where $\widehat{Y^\alpha}(\xi)=\int_{\mathbb{T}^2}e^{-2\pi i<\xi,x>}Y^\alpha(t,x)dx$.
By the definition of $\|Y^\alpha\|_{-2,2}$, we know that:
if $\|Y^\alpha\|_{\infty}\leq C_1$, then $\|Y^\alpha\|_{-2,2}\leq C_1$.
Consequently, we have
\begin{eqnarray}\label{eq3.10}
\|\tilde{Z}^\alpha\|_{BMO}^2\leq\frac{C_1}{2\nu}[C_1+T\varepsilon C_1^2C^2(\alpha)+\frac{1}{\varepsilon}\|\tilde{Z}^\alpha\|_{BMO}^2].
\end{eqnarray}
If we choose  $\varepsilon=\frac{C_1}{\nu},$ we derive
$$\|\tilde{Z}^\alpha\|_{BMO}\leq\frac{C_1}{\nu}\sqrt{\nu +TC^2(\alpha)C_1^2}.$$
That is, the norms $\|\tilde{Y}^\alpha\|_\infty$ and $\|\tilde{Z}^\alpha\|_{BMO}$ are uniformly bounded, depending only on $\nu, C_1, C(\alpha)$ and $T$.

Let $\beta$ be a real number to be chosen later, and consider $Y_t^{\alpha,\beta}=e^{\beta t}Y_t^\alpha$ and $\tilde{Y}_t^{\alpha,\beta}=e^{\beta t}
\tilde{Y}_t^\alpha$. By It\^o's formula,
\begin{eqnarray*}
d\tilde{Y}^{\alpha,\beta}=<\tilde{Z}^\alpha, \tilde{K}^\alpha(Y^{\alpha,\beta})>dt+\sqrt{2\nu}<\tilde{Z}^{\alpha,\beta}, dB>
+\beta\tilde{Y}^{\alpha,\beta}dt.
\end{eqnarray*}
Denote $\delta Y^{\alpha,\beta}=Y^{\alpha,\beta}-(Y^{\prime})^{\alpha,\beta}$ and $\delta Z^{\alpha,\beta}=
Z^{\alpha,\beta}-(Z^{\prime})^{\alpha,\beta}$, then we have 
\begin{eqnarray*}
d(\delta\tilde{Y}^{\alpha,\beta})=\Phi^{\alpha,\beta} dt+\beta(\delta\tilde{Y}^{\alpha,\beta})dt+\sqrt{2\nu}<\delta\tilde{Z}^{\alpha,\beta}, dB>
\end{eqnarray*}
where
\begin{eqnarray*}
\Phi_s^{\alpha,\beta}=<\tilde{Z}_s^\alpha, \tilde{K}^\alpha(Y_s^{\alpha,\beta})>-<(\tilde{Z}_s^\prime)^\alpha, \tilde{K}^\alpha((Y_s^{\prime})^{\alpha,\beta})>.
\end{eqnarray*}
According to It\^o's formula
\begin{eqnarray*}
|\delta \tilde{Y}_t^{\alpha,\beta}|^2&=&-2\nu\int_t^T|\delta \tilde{Z}^{\alpha,\beta}|^2 ds
-2\beta\int_t^T|\delta \tilde{Y}^{\alpha,\beta}|^2 ds\\
&&-2\int_t^T(\delta \tilde{Y}^{\alpha,\beta})\Phi^{\alpha,\beta} ds-2\sqrt{2\nu}\int_t^T(\delta\tilde{Y}^{\alpha,\beta})
<\delta \tilde{Z}^{\alpha,\beta}, dB>
\end{eqnarray*}
and taking conditional expectations, we obtain
\begin{eqnarray*}
|\delta \tilde{Y}_t^{\alpha,\beta}|^2=-2\nu\mathbb{E}^{\mathcal{F}_t}\int_t^T|\delta \tilde{Z}^{\alpha,\beta}|^2 ds-2\beta\mathbb{E}^{\mathcal{F}_t}\int_t^T|\delta \tilde{Y}^{\alpha,\beta}|^2 ds
-2\mathbb{E}^{\mathcal{F}_t}\int_t^T(\delta \tilde{Y}^{\alpha,\beta})\Phi^{\alpha,\beta} ds.
\end{eqnarray*}
Integrating over $\mathbb{T}^2$,  
\begin{eqnarray}\label{eq3.11}
\|\delta \tilde{Y}_t^{\alpha,\beta}\|^2&=&-2\nu\mathbb{E}^{\mathcal{F}_t}\int_t^T\|\delta \tilde{Z}^{\alpha,\beta}\|^2 ds-2\beta\mathbb{E}^{\mathcal{F}_t}\int_t^T\|\delta \tilde{Y}^{\alpha,\beta}\|^2 ds\\\nonumber
&&-2\mathbb{E}^{\mathcal{F}_t}\int_t^T\int_{\mathbb{T}^2}(\delta \tilde{Y}^{\alpha,\beta})\Phi^{\alpha,\beta} ds.
\end{eqnarray}
We define
\begin{eqnarray*}
I(t):=\|\delta \tilde{Y}_t^{\alpha,\beta}\|^2+2\nu\mathbb{E}^{\mathcal{F}_t}\int_t^T\|\delta \tilde{Z}^{\alpha,\beta}\|^2 ds+2\beta\mathbb{E}^{\mathcal{F}_t}\int_t^T\|\delta \tilde{Y}^{\alpha,\beta}\|^2 ds.
\end{eqnarray*}
By (\ref{eq3.11}), using Holder's inequality and Young inequality, we obtain
\begin{eqnarray}\label{eq3.12}
I(t)&=&-2\mathbb{E}^{\mathcal{F}_t}\int_t^T\int_{\mathbb{T}^2}(\delta \tilde{Y}^{\alpha,\beta})\Phi^{\alpha,\beta} ds\\\nonumber
&\leq&2\mathbb{E}^{\mathcal{F}_t}\int_t^T\|\delta \tilde{Y}^{\alpha,\beta}\|\|\Phi^{\alpha,\beta} \|ds\\\nonumber
&\leq&2(\mathbb{E}^{\mathcal{F}_t}\int_t^T\|\delta \tilde{Y}^{\alpha,\beta}\|^2ds)^{\frac{1}{2}}(\mathbb{E}^{\mathcal{F}_t}\int_t^T\|\Phi^{\alpha,\beta} \|^2ds)^{\frac{1}{2}}\\\nonumber
&\leq&2\beta\mathbb{E}^{\mathcal{F}_t}\int_t^T\|\delta \tilde{Y}^{\alpha,\beta}\|^2ds
+\frac{1}{2\beta}\mathbb{E}^{\mathcal{F}_t}\int_t^T\|\Phi^{\alpha,\beta} \|^2ds.
\end{eqnarray}
Considering the last integral appearing on the right-hand side of (\ref{eq3.12}), we have
\begin{eqnarray*}
\|\Phi_s^{\alpha,\beta}\|&=&\|\tilde{Z}_s^\alpha\cdot \tilde{K}^\alpha(Y_s^{\alpha,\beta})-(\tilde{Z}_s^{\prime})^\alpha\cdot \tilde{K}^\alpha((Y_s^{\prime})^{\alpha,\beta})\| \\
&=&\|\tilde{Z}_s^\alpha\cdot \tilde{K}^\alpha(\delta Y_s^{\alpha,\beta})+\delta\tilde{Z}_s^{\alpha,\beta}\cdot \tilde{K}^\alpha((Y_s^{\prime})^{\alpha})\| \\
&\leq&\|\tilde{Z}_s^\alpha\| \|\tilde{K}^\alpha(\delta Y_s^{\alpha,\beta})\|+\|\delta\tilde{Z}_s^{\alpha,\beta}\|\|\tilde{K}^\alpha((Y_s^{\prime})^{\alpha})\| \\
&\leq&C(\alpha) \|\tilde{Z}_s^\alpha\| \|\delta Y_s^{\alpha,\beta}\|_{-2,2}+C(\alpha)\|\delta\tilde{Z}_s^{\alpha,\beta}\|\|(Y_s^{\prime})^\alpha\|_{-2,2} 
\end{eqnarray*}
where $\|(Y_s^{\prime})^\alpha\|_{-2,2}\leq C_1$ and 
\begin{eqnarray}\label{eq3.13}
\|\delta Y_s^{\alpha,\beta}\|_{-2,2}^2&=&\int_{\mathbb{R}^2}(1+|\xi|^2)^{-2}|\widehat{\delta Y^{\alpha,\beta}}(\xi)|^2d\xi\\\nonumber
&\leq&\int_{\mathbb{R}^2}|\widehat{\delta Y^{\alpha,\beta}}(\xi)|^2d\xi\\\nonumber
&=&\|\delta Y_s^{\alpha,\beta}\|^2.
\end{eqnarray}
Together with (\ref{eq3.12}) we obtain
\begin{eqnarray}\label{eq3.14}
I(t)&\leq&2\beta\mathbb{E}^{\mathcal{F}_t}\int_t^T\|\delta \tilde{Y}^{\alpha,\beta}\|^2ds
+\frac{C^2(\alpha)}{\beta}\mathbb{E}^{\mathcal{F}_t}\int_t^T\|\tilde{Z}_s^\alpha\|^2 ds\sup\limits_{\Omega\times [0,T]}\|\delta Y_t^{\alpha,\beta}\|^2\\\nonumber
&&+\frac{C^2(\alpha)C_1^2}{\beta}\mathbb{E}^{\mathcal{F}_t}\int_t^T\|\delta\tilde{Z}_s^{\alpha,\beta}\|^2ds\\\nonumber
&\leq&2\beta\mathbb{E}^{\mathcal{F}_t}\int_t^T\|\delta \tilde{Y}^{\alpha,\beta}\|^2ds
+\frac{C^2(\alpha)C_1^2}{\beta}\frac{\nu +TC^2(\alpha)C_1^2}{\nu^2}\sup\limits_{\Omega\times [0,T]}\|\delta Y_t^{\alpha,\beta}\|^2\\\nonumber
&&+\frac{C^2(\alpha)C_1^2}{\beta}\|\delta\tilde{Z}^{\alpha,\beta}\|_{BMO}^2
\end{eqnarray}
where we have used the uniform bounds $\|\tilde{Z}^\alpha\|_{BMO}\leq\frac{C_1}{\nu}\sqrt{\nu +TC^2(\alpha)C_1^2}$ in the second inequality.
Choose $\beta>0$ such that
$$1\geq\frac{\nu}{2},~~~~~~~~~2\nu-\frac{C^2(\alpha)C_1^2}{\beta}\geq \frac{\nu}{2},~~~~~~
\frac{C^2(\alpha)C_1^2}{\beta}\frac{\nu +TC^2(\alpha)C_1^2}{\nu^2}\leq\frac{\nu}{16}.$$
Inequality (\ref{eq3.14}) implies 
$$\sup\limits_{\Omega\times[0,T]}\|\delta\tilde{Y}_t^{\alpha,\beta}\|^2+\|\delta\tilde{Z}^{\alpha,\beta}\|_{BMO}^2\leq\frac{1}{8}\sup\limits_{\Omega\times[0,T]}\|\delta Y_t^{\alpha,\beta}\|^2,$$
that is,
$$\sup\limits_{\Omega\times[0,T]}\|\delta\tilde{Y}_t^{\alpha,\beta}\|+\|\delta\tilde{Z}^{\alpha,\beta}\|_{BMO}\leq\frac{1}{2}\sup\limits_{\Omega\times[0,T]}\|\delta Y_t^{\alpha,\beta}\|.$$

Therefore there exists $\beta>0$ such that, $\mathcal{L}$ is a contraction on $\mathcal{H}$ under the norm
\begin{eqnarray}\label{eq3.15}
\|Y^\alpha\|_{\beta,BMO}=\sup\limits_{\Omega\times[0,T]}\|Y_t^{\alpha,\beta}\|+\|Z^{\alpha,\beta}\|_{BMO},
\end{eqnarray}
where $Z_t^{\alpha,\beta}=e^{\beta t} Z_t^\alpha$ and $Z^\alpha$ is the density process of the martingale part of $Y^\alpha$.

We now complete the proof of Theorem \ref{thm 3.1}. First we construct the sequence of Picard's iteration as  follows.
Begin with 
$$Y_0^\alpha(t,x)=\mathbb{E}\{\xi(x)|\mathcal{F}_t\}$$
and $Z_0^\alpha$  the density process of 
$Y_0^\alpha$ with respect to the Brownian motion determined by It\^o's martingale representation and define 
$Y_{n+1}^\alpha=\mathcal{L}(Y_n^\alpha)$ for $n=0,1,2,\cdots.$ Then  (\ref{eq3.8}) implies that all $Y_n^\alpha\in \mathcal{H}$, so that 
\begin{eqnarray*}
\mathbb{P}\{|Y_n^\alpha(t,x)|\leq C_1  ~~~for~~ all ~~(t,x,n)\in [0,T]\times \mathbb{T}^2\times \mathbb{N}\}=1.
\end{eqnarray*}
Finally, from (\ref{eq3.15}), $\{Y_n^\alpha\}$ is a Cauchy sequence for the norm $\|\cdot\|_{\beta,BMO}$ for some 
$\beta>0$, and therefore it has a limit $Y^\alpha$ which is a solution to $(\ref{eq2.13})$.

The last statement of the Theorem is a well known result in FBSDE equations.

\section{The local existence theorem in $W^{k,p}(\mathbb{R}^d;\mathbb{R}^d)$}\label{section4}
\setcounter{equation}0
\subsection{Formulation of the d-dimensional problem}\label{subsection4.1}

Suppose that $m$ is a smooth solution of (\ref{eq1.2}). We define
$u(t,x):=(I-\alpha^2\Delta)^{-1}m(t,x)$.
Let $W$ be a standard $\mathbb{R}^d$-valued Brownian motion on a complete probability space 
$(\Omega,\mathcal{F},P)$ and $X_s^t(x)$ satisfy the following stochastic differential equation (SDE)
\begin{eqnarray*}
dX_s^t(x)=\sqrt{2\nu}dW_s-u(T-s,X_s^t(x))ds,~~~X_t^t(x)=x,
\end{eqnarray*}
where  $0\leq t\leq s\leq T$.
We denote $Y_s^t(x):= m(T-s,X_s^t(x)), Z_s^t(x):= \nabla m(T-s,X_s^t(x)).$ 
By It\^o's formula, we obtain the following forward backward stochastic differential equation (FBSDE):
\begin{eqnarray}\label{eq4.1}
&&dX_s^t(x)=\sqrt{2\nu}dW_s-u(T-s,X_s^t(x))ds\\\nonumber
&&dY_s^t(x)=\sqrt{2\nu}Z_s^t(x)dW_s+[\nabla p(T-s,X_s^t(x))-\alpha^2(\nabla u^T\Delta u)(T-s,X_s^t(x))]ds\\\nonumber
&&X_t^t(x)=x, Y_T^t(x)=m_0(X_T^t(x))\\\nonumber
&&\Delta p=-\sum_{i,j=1}^d [\partial_i u^j\partial_j (u^i-\alpha^2\Delta u^i)-\alpha^2\partial_{ii}^2u^j\Delta u^j-\alpha^2\partial_iu^j\Delta\partial_iu^j]\\\nonumber
&&u=(I-\alpha^2\Delta)^{-1}m.
\end{eqnarray}
On the other hand, if $(X_s^t(x),Y_s^t(x),Z_s^t(x))$ is a solution of (\ref{eq4.1}), where 
$u$ and $p$ are regular enough as the coefficients,  by Theorem 3.2 in \cite{Pardoux-Peng:92}, the vector field 
$m(t,x):=Y_{T-t}^{T-t}(x)$ satisfies equation (\ref{eq1.2}) for $t\in [0,T]$. From the expression of $\Delta p(t,x)$ in (\ref{eq4.1}) and equation (\ref{eq1.2}), we can derive the divergence free condition $\nabla\cdot m(t)=0$.

For convenience, we use the following notations
\begin{eqnarray}\label{eq4.2}
&&Nf(x):=C(d)\int_{\mathbb{R}^d}\frac{f(y)}{|x-y|^{d-2}}dy,~~ \forall f\in C_c^\infty(\mathbb{R}^d),\\\nonumber
&&G_v:=\sum_{i,j=1}^d [\partial_i v^j\partial_j (v^i-\alpha^2\Delta v^i)-\alpha^2\partial_{ii}^2v^j\Delta v^j-\alpha^2\partial_iv^j\Delta\partial_iv^j],\\\nonumber
&&F_v:=\nabla NG_v, ~~J_v:=F_v+\alpha^2\nabla v^T\Delta v,
\end{eqnarray}
where the operator $N$ satisfies  $\Delta Nf(x)=f(x)$, $\forall f\in C_c^\infty(\mathbb{R}^d)~(d\geq 3)$; $C(d)$ is a constant depending 
on $d$. From \cite{Stein:70}, we know that $Nf$ is well defined for every $f\in L^{p^\prime}(\mathbb{R}^d)$ with $1<p^\prime<\frac{d}{2}$.
Suppose $m_0\in C_c^\infty(\mathbb{R}^d,\mathbb{R}^d),~ m\in C([0,T];C_c^\infty(\mathbb{R}^d,\mathbb{R}^d))$ satisfy $\nabla \cdot m_0=0$ and $\nabla \cdot m(t)=0$ for every $t$. In the following context, we suppress the index $v$ and denote $(X_s^t,Y_s^t,Z_s^t)$ the unique solution of the following FBSDE
\begin{eqnarray}\label{eq4.3}
&&dX_s^t(x)=\sqrt{2\nu}dW_s-v(T-s,X_s^t(x))ds\\\nonumber
&&dY_s^t(x)=\sqrt{2\nu}Z_s^t(x)dW_s-J_v(T-s,X_s^t(x))ds\\\nonumber
&&X_t^t(x)=x, Y_T^t(x)=m_0(X_T^t(x)).
\end{eqnarray}

In the rest of this section, we prove the local existence and uniqueness of the solution to d-dimensional $(d\geq 3)$ Navier-Stokes-$\alpha$ equation (\ref{eq1.2}) through FBSDE (\ref{eq4.3}).

\subsection{Statement of the local existence theorem}\label{subsection4.2}
Let us give the main result for the d-dimensional problem.
\begin{theorem}\label{thm4.1}
Suppose $k>1$ is an integer, $d<p<\infty$, $d\geq 3$ and $m_0\in W^{k,p}(\mathbb{R}^d;\mathbb{R}^d)$ with $\nabla\cdot m_0=0$. Then there exists a vector field $m\in C([0,T_0];W^{k,p}(\mathbb{R}^d;\mathbb{R}^d))$ for some constant $T_0>0$ which only depends on $\|m_0\|_{W^{k,p}}$, such that $m$ is the unique solution of (\ref{eq1.2}).
\end{theorem}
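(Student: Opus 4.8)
The plan is a Banach fixed-point argument built on the decoupled system (\ref{eq4.3}), using the momentum $m$ as the unknown. Put $R:=2\|m_0\|_{W^{k,p}}$ and let
\[
E:=\Big\{v\in C([0,T_0];W^{k,p}(\mathbb{R}^d;\mathbb{R}^d)):v(0)=m_0,\ \nabla\cdot v(t)=0,\ \sup_{0\le t\le T_0}\|v(t)\|_{W^{k,p}}\le R\Big\},
\]
a complete metric space for $d(v,\tilde v)=\sup_t\|v(t)-\tilde v(t)\|_{W^{k,p}}$. Given a frozen $m\in E$, I would set $u:=(I-\alpha^2\Delta)^{-1}m$, use $u$ as the field $v$ in (\ref{eq4.3}), solve the forward SDE for the flow $X_s^t$, and define $\Phi(m)(t,x):=Y_{T-t}^{T-t}(x)$. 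Because the forward equation does not see $(Y,Z)$, the backward part integrates explicitly, and since the driving increments after time $T-t$ are independent of $\mathcal{F}_{T-t}$ while $X_{T-t}^{T-t}(x)=x$, one obtains the deterministic representation
\[
\Phi(m)(t,x)=\mathbb{E}\Big[m_0\big(X_T^{T-t}(x)\big)+\int_{T-t}^{T}J_u\big(T-s,X_s^{T-t}(x)\big)\,ds\Big].
\]
A fixed point $m=\Phi(m)$ then solves (\ref{eq1.2}) by Theorem 3.2 of \cite{Pardoux-Peng:92}, and $\Phi(m)(0)=m_0$ holds automatically.

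The first block of estimates controls the nonlinearity. Since $(I-\alpha^2\Delta)^{-1}$ gains two derivatives, $\|u(t)\|_{W^{k+2,p}}\lesssim\|m(t)\|_{W^{k,p}}$; and because $p>d$, $k>1$, the space $W^{k,p}$ is a Banach algebra, $W^{k-1,p}\hookrightarrow L^\infty$ and $W^{k+2,p}\hookrightarrow C^{k+1}_b$. The term $G_u$ is a finite sum of products of two derivative factors of $u$ whose orders sum to at most four (the top contributions being schematically $\nabla u\cdot\nabla^3 u$ and $\nabla^2 u\cdot\nabla^2 u$); a Moser/Kato--Ponce product inequality gives $\|G_u\|_{W^{k-1,p}}\lesssim\|u\|_{W^{k+2,p}}^2$. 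The Calder\'on--Zygmund bound for the order $-1$ operator $\nabla N$ (i.e. $\nabla^2 N$ is $L^p$-bounded for $1<p<\infty$) yields $\|F_u\|_{W^{k,p}}=\|\nabla N G_u\|_{W^{k,p}}\lesssim\|G_u\|_{W^{k-1,p}}$, while the algebra property controls $\alpha^2\nabla u^T\Delta u$. Altogether $\|J_u(t)\|_{W^{k,p}}\lesssim\|m(t)\|_{W^{k,p}}^2\le C R^2$, with $C$ depending only on $d,k,p,\alpha$.

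Next I would establish flow regularity and the self-map property. The drift $-u$ lies in $C([0,T_0];C^{k+1}_b)$, so (\ref{eq4.3}) has a unique strong solution whose flow $x\mapsto X_s^t(x)$ is $k+1$ times differentiable; applying Gronwall to the SDEs satisfied by $\nabla X_s^t,\dots,\nabla^{k+1}X_s^t$ shows that for small $T_0$ the flow is a near-identity diffeomorphism with $\mathbb{E}\|\nabla X_s^t-I\|_\infty$ and $\mathbb{E}\|\nabla^j X_s^t\|_\infty$ ($2\le j\le k+1$) bounded by $C(R)(s-t)$. Pushing the norm through the representation by Minkowski's integral inequality, $\|\Phi(m)(t)\|_{W^{k,p}}\le\mathbb{E}\|m_0\circ X_T^{T-t}\|_{W^{k,p}}+\int_{T-t}^{T}\mathbb{E}\|J_u(T-s,\cdot)\circ X_s^{T-t}\|_{W^{k,p}}\,ds$; a change of variables together with the Fa\`a di Bruno chain rule for $\nabla^k(f\circ X)$ bounds each composition by $\|f\|_{W^{k,p}}$ times polynomial factors in the flow derivatives. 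For $T_0$ small this gives $\mathbb{E}\|m_0\circ X_T^{T-t}\|_{W^{k,p}}\le\|m_0\|_{W^{k,p}}(1+C(R)T_0)$ and an integral term $\le C(R)R^2T_0$, so $\Phi(E)\subset E$.

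Finally, for the contraction take $m_1,m_2\in E$ with velocities $u_1,u_2$ and flows $X^{(1)},X^{(2)}$. Gronwall gives $\mathbb{E}\|X^{(1)}_s-X^{(2)}_s\|_\infty+\mathbb{E}\|\nabla X^{(1)}_s-\nabla X^{(2)}_s\|_\infty\lesssim T_0\,d(m_1,m_2)$, and the quadratic estimates of the first block make $m\mapsto J_u$ Lipschitz on $E$; inserting both into the representation yields $d(\Phi(m_1),\Phi(m_2))\le C(R)T_0\,d(m_1,m_2)$. Choosing $T_0=T_0(\|m_0\|_{W^{k,p}})$ small makes $\Phi$ a contraction, so Banach's theorem furnishes the unique $m\in E$; the divergence-free condition propagates as in the remark following (\ref{eq4.1}), and continuity in $t$ follows from the representation. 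I expect the main obstacle to be the self-map step: bounding the full $W^{k,p}$ norm of the random compositions $f\circ X_s^t$ uniformly in $\omega$ demands careful moment control of every derivative $\nabla^j X_s^t$ up to order $k+1$ together with clean Fa\`a di Bruno bookkeeping, and it is precisely here that the two-derivative gain of $(I-\alpha^2\Delta)^{-1}$ is indispensable, since it is what lets $u\in W^{k+2,p}$ absorb the third-order derivatives hidden in $G_u$ and $\nabla u^T\Delta u$.
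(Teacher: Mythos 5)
Your overall architecture (freeze $m$, solve the decoupled FBSDE (\ref{eq4.3}) with drift $u=(I-\alpha^2\Delta)^{-1}m$, represent $\Phi(m)$ by Feynman--Kac, and run a Banach fixed point) is the same as the paper's, and your estimates on $J_u$ and on the flow derivatives are in the spirit of Lemmas \ref{lemma4.2} and \ref{lemma4.3}. However, there are two genuine gaps. First, your map does not send $E$ into $E$: for $m\in E$ with $m\neq\Phi(m)$, the function $\Phi(m)$ solves the linear equation $\partial_t\Phi+u\cdot\nabla\Phi=\nu\Delta\Phi+J_u$, and $\nabla\cdot J_u=\sum_{i,j}\partial_iu^j\partial_jm^i$ is built from the \emph{input} $m$, not from $\Phi$; hence $h:=\nabla\cdot\Phi$ satisfies an inhomogeneous transport--diffusion equation with source $\sum_{i,j}\partial_iu^j\partial_j(\Phi^i-m^i)$, which does not vanish off the fixed point. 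So the divergence-free constraint in your definition of $E$ is violated by the iterates; the remark following (\ref{eq4.1}) only applies to actual solutions of (\ref{eq1.2}). The paper repairs this by inserting the Leray--Hodge projection into the definition of $\mathcal{P}_\nu$, and then, \emph{at the fixed point}, recovers $\nabla\cdot\Phi=0$ by a separate Gr\"onwall argument exploiting $\Phi-\mathbb{P}\Phi=\nabla N(\nabla\cdot\Phi)$ (the estimate (\ref{eq4.37})). Some device of this kind is unavoidable in your scheme.

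Second, the contraction cannot hold in the metric $d(v,\tilde v)=\sup_t\|v(t)-\tilde v(t)\|_{W^{k,p}}$ you chose. The difference of the two representations contains the term $\mathbb{E}\,m_0(X^{(1)}_T(x))-\mathbb{E}\,m_0(X^{(2)}_T(x))$, and the only way to compare the two compositions is through $|m_0(X^{(1)})-m_0(X^{(2)})|\le\|\nabla m_0\|\,|X^{(1)}-X^{(2)}|$ and its higher-order analogues; estimating this difference in $W^{k,p}$ therefore consumes $k+1$ derivatives of $m_0$, which is one more than you have. This derivative loss is intrinsic to transport by a flow, and it is exactly why the paper proves the Lipschitz estimates (Lemmas \ref{lemma4.4}, \ref{lemma4.5}) one level down, in $W^{k-1,p}$, and obtains the contraction on the ball $\mathcal{B}(m_0,T_0,p,k,\tilde K_0)$ (bounded in $W^{k,p}$, complete for the $W^{k-1,p}$ distance), recovering the $W^{k,p}$ regularity of the limit by weak compactness. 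You would need to restructure your iteration in the same two-norm fashion; as written, the final contraction inequality $d(\Phi(m_1),\Phi(m_2))\le C(R)T_0\,d(m_1,m_2)$ is not provable for $m_0\in W^{k,p}$ only. A lesser but real omission is that applying Pardoux--Peng to identify the fixed point with a solution of (\ref{eq1.2}) requires coefficients smoother than $W^{k,p}$ provides, so an approximation by $C_c^\infty$ data (as in Proposition \ref{prop4.7} and the final construction) is also needed.
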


For $1<p<\infty, 1<p^\prime<\frac{d}{2},T>0$, we define
\begin{eqnarray}
&&\mathcal{F}(p,p^\prime,T):=\{ m\in C([0,T];C_c^\infty(\mathbb{R}^d;\mathbb{R}^d)):\\\nonumber
&&\sup\limits_{t\in[0,T]}(\|m(t)\|_{W^{2,p^\prime}}+\|m(t)\|_{W^{k,p}})<\infty,~\forall ~k>1;~~\nabla\cdot m(t)=0,~~\forall t\in [0,T]\},
\end{eqnarray}
\begin{eqnarray}
\mathcal{B}(m_0,T,p,k)&:=&\{ m \in C([0,T];W^{k,p}(\mathbb{R}^d;\mathbb{R}^d)):\nonumber\\
&&m(0,x)=m_0(x),~~\nabla\cdot m(t)=0,~~\forall t\in [0,T]\}.
\end{eqnarray}
For $m\in \mathcal{F}(p,p^\prime,T)$ with
$m_0\in C_c^\infty(\mathbb{R}^d;\mathbb{R}^d)$ and $\nabla \cdot m_0=0$, we define
$$\mathcal{P}_\nu(m):=\mathbb{P}(Y_{T-t}^{T-t}(\cdot))\in C([0,T];C_c^\infty(\mathbb{R}^d;\mathbb{R}^d)),$$
where $Y$ is the solution of (\ref{eq4.3}) with coefficients $v:=(I-\alpha^2\Delta)^{-1}m$ and initial condition $m_0:=m(0)$, $\mathbb{P}$ the Leray-Hodge projection on the space of divergence free vector fields. 
To prove Theorem \ref{thm4.1}, we first extend the map $\mathcal{P}_\nu$ from $\mathcal{F}(p,p^\prime,T)$ to $\mathcal{B}(m_0,T,p,k)$ in Proposition \ref{prop4.7}. Then we prove that there is a unique fixed point of the map $\mathcal{P}_{\nu}:\mathcal{B}(m_0,T,p,k)\rightarrow \mathcal{B}(m_0,T,p,k)$ in Theorem \ref{thm4.8}. At last, we use the fixed point theorem to show the local existence and uniqueness of the solution to the Navier-Stokes-$\alpha$ equation.

\subsection{Proof of the local existence theorem}\label{subsection4.3}
In this subsection, we use the method in \cite{Chen-Cruzeiro-Qian:13} to prove Theorem \ref{thm4.1}. The constant C may be different according to the context.
\begin{lemma}\label{lemma4.2}
Let $d<p<\infty$ and $1<p^\prime<\frac{d}{2}$. Then for $m:=v-\alpha^2\Delta v\in W^{2,p^\prime}(\mathbb{R}^d;\mathbb{R}^d)$,$~~m\in W^{k,p}(\mathbb{R}^d;\mathbb{R}^d)~(k>1)$ and $\nabla\cdot m(t)=0$, we have 
\begin{eqnarray*}
&&\|J_v\|_{L^p}\leq C\|\nabla m\|_{L^\infty}\|v\|_{L^p}+C\alpha^2\|v\|_{W^{2,p}}\|v\|_{W^{2,p}}\\
&&\|J_v\|_{W^{1,p}}\leq C\|\nabla m\|_{L^\infty}\|v\|_{W^{1,p}}+C\alpha^2\|v\|_{W^{2,p}}\|v\|_{W^{3,p}}\\
&&\|J_v\|_{W^{k,p}}\leq C(\alpha)\|m\|_{W^{k,p}}\|v\|_{W^{k,p}},
\end{eqnarray*}
where $C(\alpha)$ is a constant which only depends on $\alpha$.
\end{lemma}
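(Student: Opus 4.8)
The plan is to exploit the fact that $J_v$ is built from the Newtonian potential $N$ (with $\Delta N = I$), so that $F_v = \nabla N G_v$ is, up to the divergence structure of $G_v$, a composition of Calder\'on--Zygmund operators. First I would record the two facts that drive every estimate: the second-order operators $\nabla^2 N = \nabla^2 \Delta^{-1}$ (products of Riesz transforms) are bounded on $L^p$ and on $W^{k,p}$ for every $1<p<\infty$ (\cite{Stein:70}, \cite{Agmon-Douglis:59}), and the resolvent $(I-\alpha^2\Delta)^{-1}$ gains two derivatives, i.e. $\|v\|_{W^{k+2,p}} = \|(I-\alpha^2\Delta)^{-1}m\|_{W^{k+2,p}} \leq C(\alpha)\|m\|_{W^{k,p}}$. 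I would also rewrite $G_v$ in divergence form $G_v = \nabla\cdot H_v$, with $H_v^i = \sum_j v^j\partial_j m^i - \alpha^2\sum_j \partial_i v^j \Delta v^j$; this is legitimate because passing between the two expressions for $G_v$ in (\ref{eq1.3}) uses precisely $\nabla\cdot m = 0$. Note that $-\alpha^2\nabla v^T\Delta v$ is exactly the second half of $H_v$, so $J_v = (\nabla N\nabla\cdot)H_v + \alpha^2\nabla v^T\Delta v$, where $\nabla N\nabla\cdot$ is an order-zero operator.

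For the $L^p$ bound I would estimate $F_v = (\nabla N\nabla\cdot)H_v$ directly, obtaining $\|F_v\|_{L^p}\leq C\|H_v\|_{L^p}$; then H\"older together with the embedding $W^{1,p}\hookrightarrow L^\infty$ (valid since $p>d$) gives $\|(v\cdot\nabla)m\|_{L^p}\leq\|\nabla m\|_{L^\infty}\|v\|_{L^p}$ and $\alpha^2\|\nabla v^T\Delta v\|_{L^p}\leq C\alpha^2\|\nabla v\|_{L^\infty}\|\Delta v\|_{L^p}\leq C\alpha^2\|v\|_{W^{2,p}}^2$, which is the first inequality. The decisive trick for the first-derivative bound is to regroup the derivatives across $N$: writing $\nabla F_v = \nabla^2 N\,G_v$ keeps an order-zero operator acting, so $\|\nabla F_v\|_{L^p}\leq C\|G_v\|_{L^p}$. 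Crucially, in its non-divergence form $G_v$ contains $m$ only through $\nabla m$, whence $\|G_v\|_{L^p}\leq\|\nabla m\|_{L^\infty}\|\nabla v\|_{L^p} + C\alpha^2\|v\|_{W^{2,p}}\|v\|_{W^{3,p}}$, after using $p>d$ to place the highest-order factor in $L^p$ and the lower one in $L^\infty$. Summing the two bounds and differentiating the separate term $\alpha^2\nabla v^T\Delta v$ yields the second inequality.

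For the general $W^{k,p}$ estimate ($k\geq2$) I would again put exactly two derivatives inside $N$, writing $\nabla^j F_v = \nabla^2 N(\nabla^{j-1}G_v)$ for $1\le j\le k$, so that $\|F_v\|_{W^{k,p}}\leq C\|H_v\|_{L^p}+C\|G_v\|_{W^{k-1,p}}$. Since $(k-1)p>d$, the space $W^{k-1,p}$ is a Banach algebra and admits Moser-type product estimates; applying these to the three summands of $G_v$ controls $\|G_v\|_{W^{k-1,p}}$ by products such as $\|v\|_{W^{k,p}}\|m\|_{W^{k,p}}$ and $\alpha^2\|v\|_{W^{k,p}}\|v\|_{W^{k+2,p}}$. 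The excess derivatives on $v$ are then removed by the two-derivative smoothing $\|v\|_{W^{k+2,p}}\leq C(\alpha)\|m\|_{W^{k,p}}$, and all powers of $\alpha$ and all constants are absorbed into $C(\alpha)$; the term $\alpha^2\nabla v^T\Delta v$ is handled the same way. This gives the third inequality.

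I expect the derivative bookkeeping to be the only genuine obstacle. A na\"ive estimate of the transport term $(v\cdot\nabla)m$ produces $\|v\|_{L^\infty}\|\nabla^2 m\|_{L^p}$, and the $\alpha^2$ terms in $G_v$ carry up to third derivatives of $v$; either would force a $W^{k+1,p}$ or $W^{k+2,p}$ norm into the bound. The two devices that avoid this---(a) regrouping so that only the order-zero operator $\nabla^2 N$ acts while $G_v$ (which sees $m$ only through $\nabla m$) stays at order $k-1$, and (b) trading surplus derivatives of $v$ for derivatives of $m$ via the resolvent smoothing---are what make the right-hand sides land exactly as stated, and checking term by term that they do so is the delicate part of the argument.
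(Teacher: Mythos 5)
Your proposal is correct and follows essentially the same route as the paper: both exploit $\nabla\cdot m=0$ to write $G_v$ in divergence form so that $F_v=\nabla N\partial_i f_i$ (respectively $\nabla^j F_v=\nabla^2N\,\nabla^{j-1}G_v$) involves only order-zero singular integral operators, both use the embedding $W^{1,p}\hookrightarrow L^\infty$ for $p>d$ to place factors in H\"older's inequality, and both trade the surplus derivatives on $v$ for derivatives of $m$ via $\|v\|_{W^{k+2,p}}\leq C(\alpha)\|m\|_{W^{k,p}}$. The only cosmetic difference is that you invoke the Banach-algebra/Moser product structure of $W^{k-1,p}$ explicitly for general $k$, where the paper simply iterates the $k=2$ computation.
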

\begin{proof}
Since $\nabla\cdot m=0$, we have
\begin{eqnarray*}
G_v&=&\sum_{i,j=1}^d [\partial_i v^j\partial_j (v^i-\alpha^2\Delta v^i)-\alpha^2\partial_{ii}^2v^j\Delta v^j-\alpha^2\partial_iv^j\Delta\partial_iv^j]\\
&=&\sum_{i=1}^d\partial_i\sum_{j=1}^d [v^j\partial_j(v^i-\alpha^2\Delta v^i)-\alpha^2\partial_iv^j\Delta v^j]\\
&:=&\sum_{i=1}^d\partial_i f_i.
\end{eqnarray*}
Therefore, $F_v=\nabla NG_v=\sum_{i=1}^d\nabla N\partial_i f_i$. By Chapter 2 in \cite{Stein:70}, we can check that $\nabla N\partial_i$ is a singular integral operator and that it is bounded in $L^p~ (1< p<\infty)$ space. Then we have
\begin{eqnarray*}
\|F_v\|_{L^p}\leq C\sum_{i=1}^d\|f_i\|_{L^p}
&\leq& C\sum_{i,j=1}^d[\|v^j\partial_j(v^i-\alpha^2\Delta v^i)\|_{L^p}+\alpha^2\|\partial_iv^j\Delta v^j\|_{L^p}]\\
&\leq& C(\|\nabla m\|_{L^\infty}\|v\|_{L^p}+\alpha^2\|\nabla v\|_{L^\infty}\|\Delta v\|_{L^p}).
\end{eqnarray*}
Since $J_v:=F_v+\alpha^2\nabla v^T\Delta v$, by the Sobolev embedding theorem $(d<p<\infty)$, we have
\begin{eqnarray*}
\|J_v\|_{L^p}&\leq&C(\|F_v\|_{L^p}+\alpha^2\|\nabla v^T\Delta v\|_{L^p})\\
&\leq& C(\|\nabla m\|_{L^\infty}\|v\|_{L^p}+\alpha^2\|\nabla v\|_{L^\infty}\|\Delta v\|_{L^p})\\
&\leq&C(\|\nabla m\|_{L^\infty}\|v\|_{L^p}+\alpha^2\|v\|_{W^{2,p}}\|v\|_{W^{2,p}}).
\end{eqnarray*}

Since $\nabla F_v=\nabla^2 NG_v$ and $\nabla^2 N$ is a singular integral operator, by the Sobolev embedding theorem, we get
\begin{eqnarray*}
\|\nabla F_v\|_{L^p}\leq C\|G_v\|_{L^p}
&\leq& C(\| \partial_i v^j\partial_j (v^i-\alpha^2\Delta v^i)\|_{L^p}+\alpha^2\|\partial_{ii}^2v^j\Delta v^j\|_{L^p}+\alpha^2\|\partial_iv^j\Delta\partial_iv^j\|_{L^p})\\
&\leq& C(\|\nabla m\|_{L^\infty}\|v\|_{W^{1,p}}+\alpha^2\|v\|_{W^{2,p}}\|v\|_{W^{3,p}}).
\end{eqnarray*}
Therefore, 
\begin{eqnarray*}
\|\nabla J_v\|_{L^p}&\leq& C(\|\nabla F_v\|_{L^p}+\alpha^2\|\nabla(\nabla v^T\Delta v)\|_{L^p})\\
&\leq&C(\|\nabla m\|_{L^\infty}\|v\|_{W^{1,p}}+\alpha^2\|v\|_{W^{2,p}}\|v\|_{W^{3,p}}).
\end{eqnarray*}

Using the same procedure, we obtain
\begin{eqnarray*}
\|\nabla^2 F_v\|_{L^p}&\leq& C\|\nabla G_v\|_{L^p}\\
&\leq&C(\|\nabla\partial_i v^j\partial_j (v^i-\alpha^2\Delta v^i) \|_{L^p} +\|\partial_i v^j\nabla\partial_j (v^i-\alpha^2\Delta v^i) \|_{L^p}\\
&&+\alpha^2\|\nabla(\partial_{ii}^2v^j\Delta v^j)\|_{L^p}+\alpha^2\|\nabla(\partial_iv^j\Delta\partial_iv^j)\|_{L^p}\\
&\leq&C(\|\nabla m\|_{L^\infty}\|v\|_{W^{2,p}}+\|m\|_{W^{2,p}}\|\nabla v\|_{L^\infty}\\
&&+\alpha^2\|v\|_{W^{2,p}}\|v\|_{W^{4,p}})\\
&\leq&C\|m\|_{W^{2,p}}\|v\|_{W^{2,p}}+C\alpha^2\|v\|_{W^{2,p}}\|(I-\alpha^2\Delta)^{-1}m\|_{W^{4,p}}\\
&\leq&C(\alpha)\|m\|_{W^{2,p}}\|v\|_{W^{2,p}}.
\end{eqnarray*}
Therefore,
\begin{eqnarray*}
\|\nabla^2 J_v\|_{L^p}&\leq&C(\|\nabla^2 F_v\|_{L^p}+\alpha^2\|\nabla^2(\nabla v^T\Delta v)\|_{L^p})\\
&\leq&C(\alpha)\|m\|_{W^{2,p}}\|v\|_{W^{2,p}}.
\end{eqnarray*}

Repeating the procedure above, we get
$$\|\nabla^k J_v\|_{L^p}\leq C(\alpha)\|m\|_{W^{k,p}}\|v\|_{W^{k,p}}.$$
\end{proof}

Applying Lemma \ref{lemma4.2}, we obtain the $W^{k,p}$ bound for the solution of (\ref{eq4.3}).
\begin{lemma}\label{lemma4.3}
Suppose $m\in \mathcal{F}(p,p^\prime,T)$ where $d<p<\infty, 1<p^\prime<\frac{d}{2}, T>0$. Let $(X,Y,Z)$ be the unique solution of (\ref{eq4.3}) with coefficients $v=(I-\alpha^2\Delta)^{-1}m$ and initial condition $m_0:=m(0)$. Denote $\Phi(t,x)=Y_t^t(x)$, then
we have 
\begin{eqnarray}\label{eq4.6}
\sup\limits_{t\in [0, T]}\|\Phi(t)\|_{W^{k,p}}\leq Ce^{CKT}\|m_0\|_{W^{k,p}}(1+TK)^{k-1}+C(\alpha)e^{CKT}TKK_0(1+TK)^{k-1},
\end{eqnarray}
where $k>1$, $K_0:=\sup\limits_{t\in[0,T]}\|m(t)\|_{W^{k,p}}, K:=\sup\limits_{t\in[0,T]}\|v(t)\|_{W^{k,p}}$, $C(\alpha)$ is a constant which only depends on $\alpha$.
\end{lemma}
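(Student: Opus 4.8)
The plan is to start from the probabilistic (Feynman--Kac) representation of $\Phi$ furnished by the backward equation in (\ref{eq4.3}) and then to estimate the spatial Sobolev norms by differentiating under the expectation and controlling the derivatives of the stochastic flow $x\mapsto X_s^t(x)$. Integrating the $Y$-equation of (\ref{eq4.3}) from $t$ to $T$ and using $Y_T^t(x)=m_0(X_T^t(x))$ gives
\begin{eqnarray*}
Y_t^t(x)=m_0(X_T^t(x))+\int_t^T J_v(T-s,X_s^t(x))\,ds-\sqrt{2\nu}\int_t^T Z_s^t(x)\,dW_s.
\end{eqnarray*}
Since the flow $(X_s^t)_{s\geq t}$ started at time $t$ from the deterministic point $x$ depends only on the Brownian increments after $t$, the quantity $Y_t^t(x)$ is both $\mathcal{F}_t$-measurable and independent of $\mathcal{F}_t$, hence deterministic; taking $\mathbb{E}^{\mathcal{F}_t}$ kills the It\^o integral and collapses to the plain expectation:
\begin{eqnarray*}
\Phi(t,x)=\mathbb{E}\big[m_0(X_T^t(x))\big]+\int_t^T\mathbb{E}\big[J_v(T-s,X_s^t(x))\big]\,ds.
\end{eqnarray*}
A useful structural fact is that the flow is volume preserving for each fixed $\omega$: since $v=(I-\alpha^2\Delta)^{-1}m$ and $\nabla\cdot m=0$ we have $\nabla\cdot v=0$, so $\det\nabla_x X_s^t(x)\equiv 1$. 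Consequently $\|g(X_s^t(\cdot))\|_{L^p}=\|g\|_{L^p}$ for any $g$, which makes the $L^p$ change of variables trivial and is what allows the top-order estimates to close.

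The core of the argument is a set of estimates for the spatial derivatives of the flow. Differentiating the forward SDE in (\ref{eq4.3}) in $x$, the Jacobian $\nabla_x X_s^t$ solves the linear (random) variational equation $d(\nabla X)=-\nabla v(T-s,X)\,\nabla X\,ds$ with $\nabla X|_{s=t}=I$ (the additive Brownian part drops out); since $p>d$ and $k\geq 2$, Sobolev embedding gives $\|\nabla v\|_{L^\infty}\leq C\|v\|_{W^{2,p}}\leq CK$, so Gronwall yields $\|\nabla X_s^t\|_{L^\infty}\leq e^{CK(s-t)}\leq e^{CKT}$. For $2\leq j\leq k$, differentiating $j$ times produces a linear equation for $\nabla^j X$ with the same leading coefficient $-\nabla v(X)$ and a source term consisting of products $(\nabla^i v)(X)\cdot\prod_l\nabla^{j_l}X$ with $2\leq i\leq j$ and $j_l<j$; variation of parameters together with the bound on $\nabla X$ and the preceding orders gives, by induction on $j$, an $L^p$ control of $\nabla^j X_s^t$ of the form $e^{CKT}(1+TK)^{j-1}$. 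The factor $(1+TK)^{k-1}$ in (\ref{eq4.6}) is exactly the accumulation of these $k-1$ inductive levels.

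With the flow estimates in hand I would apply the Fa\`a di Bruno chain rule to $\nabla_x^k[\,g(X_s^t(x))\,]$ for $g=m_0$ and $g=J_v(T-s,\cdot)$. Each resulting term is $(\nabla^j g)(X_s^t)$ times a product of flow derivatives of total order $k$; in the top-order term $j=k$ the flow factor is $(\nabla X)^{\otimes k}$, bounded in $L^\infty$ by $e^{CKT}$, and volume preservation gives $\|(\nabla^k g)(X_s^t(\cdot))\|_{L^p}=\|\nabla^k g\|_{L^p}$, while in the lower-order terms $\nabla^j g$ ($j\leq k-1$) is bounded in $L^\infty$ by $C\|g\|_{W^{k,p}}$ (Sobolev embedding, $p>d$) and the higher flow derivatives are taken in $L^p$. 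This Moser-type splitting yields $\|g(X_s^t(\cdot))\|_{W^{k,p}}\leq Ce^{CKT}(1+TK)^{k-1}\|g\|_{W^{k,p}}$. Taking $g=m_0$ controls the first term of (\ref{eq4.6}); taking $g=J_v(T-s,\cdot)$, bounding $\|J_v(T-s)\|_{W^{k,p}}\leq C(\alpha)\|m(T-s)\|_{W^{k,p}}\|v(T-s)\|_{W^{k,p}}\leq C(\alpha)K_0K$ by Lemma \ref{lemma4.2}, and integrating $ds$ over $[t,T]$ (the source of the extra factor $T$) controls the second term. Passing the $L^p$ norm through the expectation by Minkowski's integral inequality completes the estimate.

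The main obstacle I anticipate lies in the two middle steps: setting up the correct induction for the higher-order flow derivatives and verifying that the exponents of $(1+TK)$ and of $e^{CKT}$ come out exactly as stated, while carefully assigning each factor of every Fa\`a di Bruno product to either $L^\infty$ (via Sobolev embedding, using $p>d$ and $k\geq 2$) or $L^p$ (via volume preservation), so that each term closes. The analytic inputs --- the singular-integral/Sobolev bound of Lemma \ref{lemma4.2} and the Gronwall argument for $\nabla X$ --- are routine; it is the combinatorial tracking of the chain-rule terms that requires care.
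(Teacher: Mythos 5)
Your proposal is correct and follows essentially the same route as the paper: the Feynman--Kac representation of $\Phi$, volume preservation of the flow from $\nabla\cdot v=0$, Gr\"onwall bounds on $\nabla X_s^t$ and inductive $L^p$ bounds on the higher flow derivatives producing the $(1+TK)^{k-1}$ factor, the chain-rule splitting of $\|f\circ X_s^t\|_{W^{k,p}}$ between $L^\infty$ and $L^p$ factors, and finally Lemma \ref{lemma4.2} plus H\"older/Minkowski to pass the norm through the expectation. The only cosmetic difference is that you spell out Fa\`a di Bruno and the independence argument for why $Y_t^t(x)$ is deterministic, which the paper leaves implicit.
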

\begin{proof}
We first prove the following assertion:
for every $f\in W^{k,p}(\mathbb{R}^d)$ with $k>1,~d<p<\infty$, we have 
\begin{eqnarray}\label{eq4.7}
\sup\limits_{0\leq t\leq s\leq T}\|f\circ X_s^t\|_{W^{k,p}}\leq Ce^{CKT}(1+TK)^{k-1}\|f\|_{W^{k,p}} a.s.,
\end{eqnarray} 
where $f\circ X_s^t$ denotes the composition of $f$ and $X_s^t$.

Since $\nabla\cdot m(t)=0$, we have $\nabla\cdot u(t)=\nabla\cdot (I-\alpha^2\Delta)^{-1}m(t)=
(I-\alpha^2\Delta)^{-1}\nabla\cdot m(t)=0$. 
Hence for every $h\in L^1({\mathbb{R}^d})$, 
\begin{eqnarray}\label{eq4.8}
\int_{\mathbb{R}^d} h(X_s^t(x))dx=\int_{\mathbb{R}^d}h(x)dx. ~~a.s.
\end{eqnarray} 
Let us take $h=|f|^p$ in (\ref{eq4.8}),
\begin{eqnarray}\label{eq4.9}
\int_{\mathbb{R}^d} |f(X_s^t(x))|^pdx=\int_{\mathbb{R}^d}|f(x)|^pdx=\|f\|_{L^p}^p. ~~a.s.
\end{eqnarray} 

Since $m\in \mathcal{F}(p,p^\prime,T)$, there is a $C^\infty$-differentiable version of $X_s^t(\cdot)$ and $\nabla X_s^t, \nabla^2 X_s^t$ satisfying the following equation:
\begin{eqnarray}\label{eq4.10}
&&d\nabla X_s^t(x)=-\nabla v(T-s,X_s^t(x))\nabla X_s^t(x)ds\\\nonumber
&&d\nabla^2 X_s^t(x)=-\nabla v(T-s,X_s^t(x))\nabla^2 X_s^t(x)ds-\nabla^2 v(T-s,X_s^t(x))(\nabla X_s^t(x))^2ds\\\nonumber
&&\nabla X_t^t(x)=id, ~~\nabla^2 X_t^t(x)=0,
\end{eqnarray} 
where $id$ denotes the identity map in $\mathbb{R}^d$. 
By the Sobolev embedding theorem ($d<p<\infty$),
$$\|v(t)\|_{L^\infty}\leq C\|v(t)\|_{W^{1,p}}\leq CK,$$      
$$\|\nabla v(t)\|_{L^\infty}\leq C\|\nabla v(t)\|_{W^{1,p}} \leq C\|v(t)\|_{W^{2,p}}\leq CK.$$ From Gr\"onwall's inequality, we deduce that
\begin{eqnarray}\label{eq4.11}
|\nabla X_s^t(x)|\leq Ce^{CKT}~~a.s., ~~\forall x\in \mathbb{R}^d.
\end{eqnarray}
For $f\in W^{2,p}(\mathbb{R}^d)\cap C^1(\mathbb{R}^d)$, we have $\nabla (f\circ X_s^t)(x)=\nabla f(X_s^t(x))\nabla X_s^t(x)$.
By (\ref{eq4.8}) and (\ref{eq4.11}), we obtain
\begin{eqnarray}\label{eq4.12}
\int_{\mathbb{R}^d}|\nabla (f\circ X_s^t)(x)|^pdx\leq Ce^{CKT}\|\nabla f\|_{L^p}^p. ~a.s.
\end{eqnarray}

By (\ref{eq4.10}) and (\ref{eq4.11}),
\begin{eqnarray*}
|\nabla^2X_s^t(x)|\leq CK\int_t^s |\nabla^2X_r^t(x)|dr+Ce^{CKT}\int_t^s|\nabla^2 v(T-r,X_r^t(x))|dr;
\end{eqnarray*}
applying Gr\"onwall's inequality, we obtain
\begin{eqnarray*}
|\nabla^2X_s^t(x)|\leq Ce^{CKT}\int_t^s|\nabla^2 v(T-r,X_r^t(x))|dr,
\end{eqnarray*}
together with (\ref{eq4.8}) and H\"older's inequality,
\begin{eqnarray}\label{eq4.13}
&&\int_{\mathbb{R}^d}|\nabla^2X_s^t(x)|^pdx\leq CT^{p-1}e^{CKT}\int_t^s\int_{\mathbb{R}^d}|\nabla^2 v(T-r,X_r^t(x))|^pdxdr\\\nonumber
&&\leq CT^pe^{CKT}\sup\limits_{t\in [0,T]}\|v(t)\|_{W^{2,p}}^p.
\end{eqnarray}
For $f\in C^2(\mathbb{R}^d)\bigcap W^{2,p}(\mathbb{R}^d)$,
\begin{eqnarray}\label{eq4.14}
\nabla^2(f\circ X_s^t)(x)=\nabla^2f(X_s^t(x))(\nabla X_s^t(x))^2+\nabla f(X_s^t(x))\nabla^2X_s^t(x),
\end{eqnarray}
by (\ref{eq4.8}) (\ref{eq4.11}) and (\ref{eq4.13}),
\begin{eqnarray}\label{eq4.15}
&&\int_{\mathbb{R}^d}|\nabla^2 (f\circ X_s^t)(x)|^pdx\\\nonumber
&&\leq C\|\nabla X_s^t(\cdot)\|_{L^\infty}^{2p}\int_{\mathbb{R}^d}|\nabla^2f(X_s^t(x))|^pdx
+C\|\nabla f\|_{L^\infty}^p\int_{\mathbb{R}^d}|\nabla^2X_s^t(x)|^pdx\\\nonumber
&&\leq Ce^{CKT}\|\nabla^2 f\|_{L^p}^p+CT^pK^pe^{CKT}\|\nabla f\|_{W^{1,p}}^p\\\nonumber
&&\leq C(1+T^pK^p)e^{CKT}\|f\|_{W^{2,p}}^p.
\end{eqnarray}

For general $f\in W^{2,p}(\mathbb{R}^d)$, we can choose a sequence $\{f_n\}_{n=1}^{\infty}\subset C^2(\mathbb{R}^d)\bigcap W^{2,p}(\mathbb{R}^d)$, such that $\lim\limits_{n\rightarrow \infty}\|f_n-f\|_{W^{2,p}}=0$. By approximation procedure, we know 
$f\circ X_s^t \in W^{2,p}(\mathbb{R}^d)$, and (\ref{eq4.12}), (\ref{eq4.15}) still hold.

Repeating the procedure above, we can also obtain the estimate (\ref{eq4.7}).
Up to now, the assertion has been proved.

In the following context, we will use the assertion to prove our result. 
Since $m\in \mathcal{F}(p,p^\prime,T)$, from 
the computations in \cite{Pardoux-Peng:92}, we know that, for every $q\geq 2$,
\begin{eqnarray*}
\mathbb{E}(\sup\limits_{0\leq t\leq s\leq T}(|Y_s^t(x)|^q+|Z_s^t(x)|^q))<\infty.
\end{eqnarray*}
Since $Y_t^t(x)$ is deterministic, taking expectation in (\ref{eq4.3}), we obtain the Feynmann-Kac 
formula:
\begin{eqnarray}\label{eq4.16}
\Phi(t,x)=Y_t^t(x)=\mathbb{E}(m_0(X_T^t(x)))+\int_t^T\mathbb{E}J_v(T-s, X_s^t(x))ds.
\end{eqnarray}
By Lemma \ref{lemma4.2}, $J_v(t)\in W^{k,p}(\mathbb{R}^d;\mathbb{R}^d)\bigcap C_c^\infty(\mathbb{R}^d;\mathbb{R}^d)$ for every $t$. Therefore, we can change the order of expectation and differential, and  
apply H\"older's inequality:
\begin{eqnarray*}
\|\mathbb{E}J_v(T-s,X_s^t(\cdot))\|_{W^{k,p}}^p\leq \mathbb{E}\|J_v(T-s,X_s^t(\cdot))\|_{W^{k,p}}^p.
\end{eqnarray*}
By Lemma \ref{lemma4.2} and (\ref{eq4.7}), 
\begin{eqnarray*}
&&\|\mathbb{E}J_v(T-s,X_s^t(\cdot))\|_{W^{k,p}}\\
&\leq& Ce^{CKT}(1+TK)^{k-1}\|J_v(T-s)\|_{W^{k,p}}\\
&\leq&C(\alpha)e^{CKT}(1+TK)^{k-1}KK_0.
\end{eqnarray*}
Similarly, 
\begin{eqnarray*}
\|\mathbb{E}(m_0(X_T^t(\cdot)))\|_{W^{k,p}}
\leq Ce^{CKT}(1+TK)^{k-1}\|m_0\|_{W^{k,p}}.
\end{eqnarray*}
Putting the above estimate into (\ref{eq4.16}), we get (\ref{eq4.6}).
\end{proof}

\begin{lemma}\label{lemma4.4}
Suppose $m_l\in \mathcal{F}(p,p^\prime,T),l=1,2, ~v_l=(I-\alpha^2\Delta)^{-1}m_l$, for some $d<p<\infty, 1<p^\prime<\frac{d}{2},T>0$. We have the following estimates 
\begin{eqnarray*}
\|J_{v_1(t)}-J_{v_2(t)}\|_{W^{k-1,p}}\leq  C(\alpha)(K_0\|v_1-v_2\|_{W^{k-1,p}}+K\|m_1-m_2\|_{W^{k-1,p}}),
\end{eqnarray*}
where $k>1, ~K_0:=\sup\limits_{t\in[0,T],l=1,2}\|m_l\|_{W^{k,p}}, K:=\sup\limits_{t\in[0,T],l=1,2}\|v_l\|_{W^{k,p}}$, $C(\alpha)$ is a constant which depends on $\alpha$.
\end{lemma}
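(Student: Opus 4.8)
The plan is to exploit that $J_v$ is bilinear in $v$: writing $m=(I-\alpha^2\Delta)v$, every summand of $G_v$ and of the product $\alpha^2\nabla v^T\Delta v$ is a product of a derivative of $v$ with a derivative of $v$ or of $m$. Hence, setting $\delta v=v_1-v_2$ and $\delta m=m_1-m_2$, polarization turns each product $ab$ into $(\delta a)\,b_1+a_2\,(\delta b)$, so that $J_{v_1}-J_{v_2}$ becomes a finite sum of bilinear terms, each carrying exactly one difference factor ($\delta v$ or $\delta m$) and one large factor ($v_l$ or $m_l$). The goal is then to bound every such term by $K_0\|\delta v\|_{W^{k-1,p}}$ or by $K\|\delta m\|_{W^{k-1,p}}$ and add up (using also $K\le C(\alpha)K_0$, so that any $K\|\delta v\|_{W^{k-1,p}}$ is absorbed into the first type).

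I would first reduce, as in the proof of Lemma \ref{lemma4.2}, to estimating $G_{v_1}-G_{v_2}$. Since $G_v=\sum_i\partial_i f_i$ is a divergence and $\nabla N\partial_i$, $\nabla^2 N$ are order-zero singular integral operators bounded on every $W^{s,p}$, the same operator bounds used there applied to the function $G_{v_1}-G_{v_2}$ give
\begin{eqnarray*}
\|F_{v_1}-F_{v_2}\|_{W^{k-1,p}}\le C\sum_i\|f_i(v_1)-f_i(v_2)\|_{L^p}+C\|G_{v_1}-G_{v_2}\|_{W^{k-2,p}}.
\end{eqnarray*}
Keeping the divergence form only for the $L^p$ part but passing to $G_v$ for the higher norms is essential: in $G_v$ the transport-type term appears as $\partial_i v^j\partial_j m^i$, with \emph{both} factors differentiated, and this is exactly what allows its $\delta m$-contribution to stay at the $W^{k-1,p}$ level. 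I would then expand each product by the Leibniz rule, placing the large factor in $L^\infty$ through the Sobolev embedding $W^{1,p}\hookrightarrow L^\infty$ (valid because $p>d$) and the difference factor in $L^p$; the $L^p$ part above and the extra product $\alpha^2\nabla v^T\Delta v$ are treated directly in the same way.

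The device that matches the claimed norms is the two-derivative elliptic gain for $I-\alpha^2\Delta$, namely $\|\delta v\|_{W^{s+2,p}}\le C(\alpha)\|\delta m\|_{W^{s,p}}$ and $\|v_l\|_{W^{s+2,p}}\le C(\alpha)\|m_l\|_{W^{s,p}}$. Whenever a Leibniz term pushes up to $k+1$ derivatives onto a difference factor $\delta v$, I trade two of them to rewrite the term as $\|\delta m\|_{W^{k-1,p}}$ times a norm $\|v_l\|_{W^{j,p}}$ with $j\le k$, hence bounded by $K$, producing a $K\|\delta m\|_{W^{k-1,p}}$ contribution; symmetrically, up to $k+1$ derivatives on a large factor $v_l$ are absorbed into $\|m_l\|_{W^{k-1,p}}\le K_0$ paired with a norm of $\delta v$ of order at most $k-1$, producing a $K_0\|\delta v\|_{W^{k-1,p}}$ contribution. (The low cases $k=2,3$, where $W^{k-2,p}$ or $W^{k-3,p}$ is not an algebra, are handled by H\"older's inequality together with Sobolev embedding, choosing the factorization so that the difference factor is measured in $L^p$.)

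The main obstacle is precisely this top-order bookkeeping. A naive use of the Banach-algebra property of $W^{k-1,p}$ on the summands of $f_i$ — for instance on $v_2^j\partial_j\delta m^i$ — yields a factor $\|\delta m\|_{W^{k,p}}$, which is one derivative too strong and would destroy the contraction estimate used afterwards. Avoiding it requires combining (i) the one-derivative smoothing of the singular integral, i.e.\ estimating through $\|G_{v_1}-G_{v_2}\|_{W^{k-2,p}}$ rather than through the full $W^{k-1,p}$-norm of the $f_i$-difference, so that the transport term carries a derivative on each factor, with (ii) the elliptic gain of $(I-\alpha^2\Delta)^{-1}$, which converts the unavoidable top-order $\delta v$ (up to $W^{k+1,p}$) into $\delta m$ at order $k-1$. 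Once each Leibniz term is routed through one of these two mechanisms, summing over the finitely many indices and components yields the stated inequality with a constant $C(\alpha)$.
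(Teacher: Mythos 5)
Your proposal follows essentially the same route as the paper's proof: polarize each bilinear term of $J_{v_1}-J_{v_2}$, bound the $L^p$ level through the divergence form $\nabla N\partial_i(f_{i,1}-f_{i,2})$ and the higher levels through $\nabla^2 N(G_{v_1}-G_{v_2})$ using the $L^p$-boundedness of these singular integral operators, place the large factor in $L^\infty$ via the Sobolev embedding $W^{1,p}\hookrightarrow L^\infty$ (for $p>d$), and convert top-order norms of $v$-differences into $m$-differences (and $\|v_l\|_{W^{k+2,p}}$ into $\|m_l\|_{W^{k,p}}$) via the two-derivative gain of $(I-\alpha^2\Delta)^{-1}$. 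The argument is correct and matches the paper's, which carries out the $L^p$ and $W^{1,p}$ cases explicitly and iterates exactly as you describe.
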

\begin{proof}
Since $F_{v_1(t)}-F_{v_2(t)}=\sum_{i=1}^d\nabla N\partial_i 
(f_{i,1}(t)-f_{i,2}(t))$, where $f_{i,l}:=\sum_{j=1}^d[v_l^j\partial_j(v_l^i-\alpha^2\Delta v_l^i)-\alpha^2\partial_iv_l^j\Delta v_l^j]$, 
by the Sobolev embedding theorem, we obtain
\begin{eqnarray*}
&&\|F_{v_1(t)}-F_{v_2(t)}\|_{L^p}\leq C\sum_{i=1}^d\|f_{i,1}(t)-f_{i,2}(t)\|_{L^p}\\
&\leq& C(\|v_1^j\partial_jm_1^i-v_2^j\partial_jm_2^i\|_{L^p}+\alpha^2
\|\partial_iv_1^j\Delta v_1^j-\partial_iv_2^j\Delta v_2^j\|_{L^p})\\
&\leq& C(\|v_1-v_2\|_{L^p}\|\nabla m_1\|_{L^\infty}+\|v_2\|_{L^\infty}
\|\nabla m_1-\nabla m_2\|_{L^p}\\
&&+\alpha^2\|\nabla v_1-\nabla v_2\|_{L^\infty}\|\Delta v_1\|_{L^p}+\alpha^2\|\nabla v_2\|_{L^\infty}\|\Delta (v_1-v_2)\|_{L^p})\\
&\leq& C(\alpha)(\sup\limits_{l=1,2}\|m_l\|_{W^{2,p}}\|v_1-v_2\|_{L^p}+\sup\limits_{l=1,2}\|v_l\|_{W^{1,p}}\|\nabla m_1-\nabla m_2\|_{L^p}\\
&&+\sup\limits_{l=1,2}\|v_l\|_{W^{2,p}}\|v_1- v_2\|_{W^{2,p}}).
\end{eqnarray*}
Therefore,
\begin{eqnarray*}
\|J_{v_1}-J_{v_2}\|_{L^p}&\leq&C(\|F_{v_1}-F_{v_2}\|_{L^p}+\alpha^2\|\nabla v_1^T\Delta v_1-\nabla v_2^T\Delta v_2\|_{L^p})\\
&\leq& C(\alpha)(\sup\limits_{l=1,2}\|m_l\|_{W^{2,p}}\|v_1-v_2\|_{L^p}+\sup\limits_{l=1,2}\|v_l\|_{W^{1,p}}\|\nabla m_1-\nabla m_2\|_{L^p}\\
&&+\sup\limits_{l=1,2}\|v_l\|_{W^{2,p}}\|v_1- v_2\|_{W^{2,p}}).
\end{eqnarray*}

Since $\nabla (F_{v_1(t)}-F_{v_2(t)})=\nabla^2N(G_{v_1(t)}-G_{v_2(t)})$, we obtain
\begin{eqnarray*}
&&\|\nabla(F_{v_1(t)}-F_{v_2(t)})\|_{L^p}\leq C\|G_{v_1(t)}-G_{v_2(t)})\|_{L^p}\\
&\leq& C(\|\partial_iv_1^j\partial_j m_1^i-\partial_iv_2^j\partial_j m_2^i\|_{L^p}\\
&&+\alpha^2\|\Delta v_1^j\Delta v_1^j-\Delta v_2^j\Delta v_2^j\|_{L^p}+\alpha^2
\|\partial_i v_1^j\Delta \partial_i v_1^j-\partial_i v_2^j\Delta \partial_i v_2^j\|_{L^p})\\
&\leq& C(\alpha)(\|\nabla v_1-\nabla v_2\|_{L^p}\|\nabla m_1\|_{L^\infty}+\|\nabla v_2\|_{L^\infty}\|\nabla m_1-\nabla m_2\|_{L^p}\\
&&+\sup\limits_{l=1,2}\|v_l\|_{W^{2,p}}\|v_1-v_2\|_{W^{3,p}}+\sup\limits_{l=1,2}\|v_l\|_{W^{4,p}}\|v_1-v_2\|_{W^{1,p}})\\
&\leq& C(\alpha)(\sup\limits_{l=1,2}\|m_l\|_{W^{2,p}}\|v_1-v_2\|_{W^{1,p}}+\sup\limits_{l=1,2}\|v_l\|_{W^{2,p}}\|m_1-m_2\|_{W^{1,p}}).
\end{eqnarray*}
Therefore, 
\begin{eqnarray*}
&&\|\nabla(J_{v_1}-J_{v_2})\|_{L^p}\leq C(\|\nabla(F_{v_1}-F_{v_2})\|_{L^p}+\alpha^2\|\nabla(\nabla v_1^T\Delta v_1-\nabla v_2^T\Delta v_2)\|_{L^p})\\
&\leq&C(\alpha)(\sup\limits_{l=1,2}\|m_l\|_{W^{2,p}}\|v_1-v_2\|_{W^{1,p}}+\sup\limits_{l=1,2}\|v_l\|_{W^{2,p}}\|m_1-m_2\|_{W^{1,p}}).
\end{eqnarray*}

Repeating the procedure above, we get
\begin{eqnarray*}
&&\|\nabla^{(k-1)}(J_{v_1}-J_{v_2})\|_{L^p}\\
&\leq& C(\alpha)(\sup\limits_{l=1,2}\|m_l\|_{W^{k,p}}\|v_1-v_2\|_{W^{k-1,p}}+\sup\limits_{l=1,2}\|v_l\|_{W^{k,p}}\|m_1-m_2\|_{W^{k-1,p}}).
\end{eqnarray*}
\end{proof}

For vector fields $m_l\in \mathcal{F}(p,p^\prime, T)$, where $d<p<\infty, 1<p^\prime<\frac{d}{2}, 0<T<1,l=1,2$,
let $(X_l,Y_l,Z_l)$ be the solutions of (\ref{eq4.3}) with coefficients $v_l=(I-\alpha^2\Delta)^{-1}m_l$ and initial condition $m_{0,l}:=m_l(0)$. We use Lemma \ref{lemma4.2} and Lemma \ref{lemma4.4} to prove the following Lemma.

\begin{lemma}\label{lemma4.5}
Let $\Phi_l(t,x):=Y_{t,l}^t(x),l=1,2$, and $k>1,~d<p<\infty, 0\leq t\leq T$, we have the following estimate 
\begin{eqnarray}\label{eq4.17}
&&\sup\limits_{t\in [0, T]}\|\Phi_1(t)-\Phi_2(t)\|_{W^{k-1,p}}\\\nonumber
&\leq& Ce^{CKT}(1+TK)^{k-2}\|m_{0,1}-m_{0,2}\|_{W^{k-1,p}}\\\nonumber
&&+C(\alpha)TK_0(1+TK)^{k}e^{CKT}\sup_{t\in[0,T]}\|v_1(t)-v_2(t)\|_{W^{k-1,p}}\\\nonumber
&&+C(\alpha)TKe^{CKT}(1+TK)^{k-2}\sup_{t\in[0,T]}\|m_1(t)-m_2(t)\|_{W^{k-1,p}},
\end{eqnarray}
where $K_0:=\sup\limits_{t\in[0,T],l=1,2}\|m_l(t)\|_{W^{k,p}}, K:=\sup\limits_{t\in[0,T],l=1,2}\|v_l(t)\|_{W^{k,p}}$, $C(\alpha)$ is a constant which depends on $\alpha$.
\end{lemma}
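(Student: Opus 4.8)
The plan is to start from the Feynman--Kac representation (\ref{eq4.16}), written separately for each index $l=1,2$, and to subtract. This gives
\begin{eqnarray*}
\Phi_1(t,x)-\Phi_2(t,x)&=&\mathbb{E}\bigl(m_{0,1}(X_{1,T}^t(x))-m_{0,2}(X_{2,T}^t(x))\bigr)\\
&&+\int_t^T\mathbb{E}\bigl[J_{v_1}(T-s,X_{1,s}^t(x))-J_{v_2}(T-s,X_{2,s}^t(x))\bigr]ds.
\end{eqnarray*}
The organizing device is to split every difference of the form $g_1\circ X_1-g_2\circ X_2$ into a common-flow part and a common-integrand part,
$$g_1\circ X_1-g_2\circ X_2=(g_1-g_2)\circ X_1+\bigl(g_2\circ X_1-g_2\circ X_2\bigr),$$
applied once with $g_l=m_{0,l}$ and once with $g_l=J_{v_l}(T-s)$. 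The two kinds of parts are then treated by completely different tools.

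For the common-flow parts I would invoke the composition estimate (\ref{eq4.7}) at order $k-1$ directly. The term $(m_{0,1}-m_{0,2})\circ X_{1,T}^t$, after taking $\mathbb{E}\|\cdot\|_{W^{k-1,p}}^p$ and H\"older, produces exactly the first contribution $Ce^{CKT}(1+TK)^{k-2}\|m_{0,1}-m_{0,2}\|_{W^{k-1,p}}$ in (\ref{eq4.17}). Likewise $(J_{v_1}-J_{v_2})(T-s)\circ X_{1,s}^t$ is bounded by (\ref{eq4.7}) followed by Lemma \ref{lemma4.4}: the $K\|m_1-m_2\|_{W^{k-1,p}}$ piece of Lemma \ref{lemma4.4}, integrated over $[t,T]$, yields the third contribution in (\ref{eq4.17}), while the $K_0\|v_1-v_2\|_{W^{k-1,p}}$ piece feeds (at lower order in $1+TK$) into the second contribution.

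The substance lies in the common-integrand parts $m_{0,2}\circ X_1-m_{0,2}\circ X_2$ and $J_{v_2}(T-s)\circ X_1-J_{v_2}(T-s)\circ X_2$, which measure the sensitivity of the solution to the driving flow. For these I would first establish a flow-difference estimate. Writing $\delta X_s=X_{1,s}^t-X_{2,s}^t$, subtraction of the forward equations in (\ref{eq4.3}) gives
$$d(\delta X_s)=-\bigl[v_1(T-s,X_{1,s}^t)-v_1(T-s,X_{2,s}^t)\bigr]ds-(v_1-v_2)(T-s,X_{2,s}^t)\,ds,$$
whence $|\delta X_s|\le CK\int_t^s|\delta X_r|\,dr+\int_t^s|(v_1-v_2)(T-r,X_{2,r}^t)|\,dr$. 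Gr\"onwall's inequality yields a pointwise bound, and since $X_{2,r}^t$ preserves Lebesgue measure by (\ref{eq4.8}), H\"older in time gives $\|\delta X_s\|_{L^p}\le CTe^{CKT}\sup_r\|(v_1-v_2)(r)\|_{L^p}$. Combined with the identity $f\circ X_1-f\circ X_2=\int_0^1\nabla f(X_2+\theta\,\delta X)\cdot\delta X\,d\theta$, this controls the $L^p$ norm of each composition difference by $\|\nabla f\|_\infty\,\|\delta X\|_{L^p}$.

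The hard part, which I expect to be the main obstacle, is upgrading this from $L^p$ to the full $W^{k-1,p}$ norm. Differentiating the composition up to order $k-1$ produces, through the chain rule, terms containing $\nabla^j(\delta X)$ for $1\le j\le k-1$; each such derivative solves a linearized version of the flow equation whose inhomogeneity is assembled from $\nabla^j(v_1-v_2)$ together with lower-order products of $\nabla v_1,\nabla^2 v_1,\dots$ and $\nabla^i(\delta X)$. Solving this cascade of variational SDEs by iterated Gr\"onwall estimates, invoking measure preservation (\ref{eq4.8}) at each stage to pass to $L^p$, and tracking how the $(1+TK)$ factors accumulate, is the delicate bookkeeping. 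Carried out with $f=m_{0,2}$ and $f=J_{v_2}$ — whose $W^{k,p}$ sizes are controlled by $K_0$ via Lemmas \ref{lemma4.2} and \ref{lemma4.4} — this converts the flow-difference contributions into the remaining term $C(\alpha)TK_0(1+TK)^{k}e^{CKT}\sup_t\|v_1(t)-v_2(t)\|_{W^{k-1,p}}$, completing the estimate (\ref{eq4.17}).
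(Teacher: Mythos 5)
Your overall architecture is the same as the paper's: subtract the two Feynman--Kac representations, split each difference as $(g_1-g_2)\circ X_1+\bigl(g_2\circ X_1-g_2\circ X_2\bigr)$, handle the first piece with the composition estimate (\ref{eq4.7}) and Lemma \ref{lemma4.4}, and handle the second piece by a Gr\"onwall estimate on the flow difference. The $L^p$-level computation you give is correct, and it is a legitimate variant of the paper's: you put the sup norm on $\nabla f$ and the $L^p$ norm on $\delta X$, whereas the paper puts the sup norm on the flow sensitivity and the $L^p$ norm on $\nabla f$.

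However, there is a genuine gap at the top derivative order, and it is exactly the point where the paper introduces its one nontrivial device. When you expand $\nabla^{k-1}\bigl(f\circ X_1-f\circ X_2\bigr)$ by the chain rule, besides the terms containing $\nabla^j(\delta X)$ that you describe, you get the term $\bigl[\nabla^{k-1}f(X_{s,1}^t)-\nabla^{k-1}f(X_{s,2}^t)\bigr]\cdot(\nabla X_{s,1}^t)^{k-1}$. To put this in $L^p$ your scheme would require $\|\nabla^{k}f\|_{L^\infty}\,\|\delta X\|_{L^p}$, but $f$ (namely $m_{0,2}$ or $J_{v_2}$) is only in $W^{k,p}$, so $\nabla^k f$ is merely in $L^p$ and is not controlled in $L^\infty$. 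The alternative, keeping $\nabla^k f$ in $L^p$, forces you to integrate $|\nabla^k f|^p$ along the straight-line interpolation $X_2+\theta\,\delta X$, which is \emph{not} measure preserving, so (\ref{eq4.8}) does not apply to it and your "invoking measure preservation at each stage" does not go through. The paper resolves this by interpolating along the flows $X_s^{t,r}$ driven by the divergence-free convex combination $(2-r)v_1+(r-1)v_2$: since each such drift is divergence free, (\ref{eq4.20}) gives measure preservation for every $r\in[1,2]$, so $\int_{\mathbb{R}^d}|\nabla^k f(X_s^{t,r}(x))|^p\,dx=\|\nabla^k f\|_{L^p}^p$, while the $r$-derivative $V_s^{t,r}$ is bounded in $L^\infty$ by $CTe^{CKT}\sup_t\|v_1(t)-v_2(t)\|_{L^\infty}\le CTe^{CKT}\sup_t\|v_1(t)-v_2(t)\|_{W^{1,p}}$ via Gr\"onwall and Sobolev embedding. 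This measure-preserving interpolation is the missing ingredient; without it the passage from $L^p$ to $W^{k-1,p}$ in your "cascade" cannot be closed at order $k-1$.
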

\begin{proof}
We first prove the following assertion:
for every $f_1,f_2\in W^{k,p}(\mathbb {R}^d)$ with $k>1$ and $d<p<\infty$, we have 
 \begin{eqnarray}\label{eq4.18}
&&\|(f_1\circ X_{s,1}^t)(\cdot)-(f_2\circ X_{s,2}^t)(\cdot)\|_{W^{k-1,p}}\\\nonumber
&\leq&Ce^{CKT}(1+TK)^{k-2}\|f_1-f_2\|_{W^{k-1,p}}\\\nonumber
&&+CTe^{CKT}(1+TK)^{k-1}\| f_2\|_{W^{k,p}}\sup\limits_{t\in[0,T]}\|v_1(t)-v_2(t)\|_{W^{k-1,p}}~a.s.
\end{eqnarray}

We first consider the case $f_1,f_2\in C^1(\mathbb{R}^d)\bigcap W^{1,p}(\mathbb{R}^d)$. 
By triangle inequality,
\begin{eqnarray*}
|f_1(X_{s,1}^t)-f_2(X_{s,2}^t)|\leq |f_1(X_{s,1}^t)-f_2(X_{s,1}^t)|+|f_2(X_{s,1}^t)-f_2(X_{s,2}^t)|.
\end{eqnarray*}
By (\ref{eq4.8}), we deduce that
\begin{eqnarray}\label{eq4.19}
\int_{\mathbb{R}^d}|f_1(X_{s,1}^t(x))-f_2(X_{s,1}^t(x))|^p dx\leq \|f_1-f_2\|_{L^p}^p. a.s.
\end{eqnarray}
Let $X_{s}^{t,r}(x), ~1\leq r\leq 2$ satisfy the following SDE:
\begin{eqnarray*}
&&dX_{s}^{t,r}(x)=\sqrt{2\nu}dW_s-((2-r)v_1(T-s,X_{s}^{t,r}(x))+(r-1)v_2(T-s,X_{s}^{t,r}(x)))ds,\\
&&X_{t}^{t,r}(x)=x,~~0\leq t\leq s\leq T.
\end{eqnarray*}
Obviously, $X_{s}^{t,r}(x)|_{r=1}=X_{s,1}^t(x)$ and $X_{s}^{t,r}(x)|_{r=2}=X_{s,2}^t(x)$. Since 
$\nabla \cdot ((2-r)v_1+(r-1)v_2)=0$, we obtain 
\begin{eqnarray}\label{eq4.20}
\int_{\mathbb{R}^d}h(X_{s}^{t,r}(x))dx=\int_{\mathbb{R}^d}h(x)dx, ~~\forall r\in [1,2],~h\in L^1(\mathbb{R}^d) ~~a.s.
\end{eqnarray}
Since $m_l\in\mathcal{F}(p,p^\prime,T)$, there is a differentiable version of $X_{s}^{t,r}(x)$ with respect to $r$ (c.f. \cite{Kunita:90}). Let $V_s^{t,r}(x):=\frac{d}{dr}X_s^{t,r}(x)$, then it satisfies the following SDE:
\begin{eqnarray*}
dV_{s}^{t,r}(x)&=&-((2-r)\nabla v_1(T-s,X_{s}^{t,r}(x))+(r-1)\nabla v_2(T-s,X_{s}^{t,r}(x)))V_{s}^{t,r}(x)ds\\
&&+( v_1(T-s,X_{s}^{t,r}(x))- v_2(T-s,X_{s}^{t,r}(x)))ds,\\
V_{t}^{t,r}(x)&=&0,~~0\leq t\leq s\leq T.
\end{eqnarray*}
Hence by Gr\"onwall's inequality, for every $r\in [1,2]$ and $x\in\mathbb{R}^d$, 
\begin{eqnarray}\label{eq4.21}
|V_s^{t,r}(x)|\leq CTe^{CKT}\sup\limits_{t\in[0,T]}\|v_1(t)-v_2(t)\|_{L^\infty}. a.s.
\end{eqnarray}
Since
\begin{eqnarray*}
|f_2(X_{s,1}^t(x))-f_2(X_{s,2}^t(x))|^p= |\int_1^2 \frac{d}{dr}(f_2(X_s^{t,r}(x)))dr|^p\leq \int_1^2 |\nabla f_2(X_s^{t,r}(x))|^p
|V_s^{t,r}(x)|^pdr,
\end{eqnarray*}
by (\ref{eq4.20}) and (\ref{eq4.21}), 
\begin{eqnarray}\label{eq4.22}
&&\int_{\mathbb{R}^d}|f_2(X_{s,1}^t(x))-f_2(X_{s,2}^t(x))|^pdx\\\nonumber
&\leq&CT^p e^{CKT}\sup\limits_{t\in[0,T]}\|v_1(t)-v_2(t)\|_{L^\infty}^p\int_1^2\int_{\mathbb{R}^d}|\nabla f_2(X_s^{t,r}(x))|^pdxdr\\\nonumber
&\leq&CT^p e^{CKT}\sup\limits_{t\in[0,T]}\|v_1(t)-v_2(t)\|_{L^\infty}^p\int_{\mathbb{R}^d}|\nabla f_2(x)|^pdx. a.s.
\end{eqnarray}
Applying (\ref{eq4.19}) and (\ref{eq4.22}), by the Sobolev embedding theorem, we deduce that 
\begin{eqnarray}\label{eq4.23}
&&\int_{\mathbb{R}^d}|f_1(X_{s,1}^t(x))-f_2(X_{s,2}^t(x))|^pdx\\\nonumber
&\leq&C\|f_1-f_2\|_{L^p}^p+CT^pe^{CKT}\|\nabla f_2\|_{L^p}^p\sup\limits_{t\in[0,T]}\|v_1(t)-v_2(t)\|_{L^\infty}^p\\\nonumber
&\leq&C\|f_1-f_2\|_{L^p}^p+CT^pe^{CKT}\|f_2\|_{W^{1,p}}^p\sup\limits_{t\in[0,T]}\|v_1(t)-v_2(t)\|_{W^{1,p}}^p~a.s.
\end{eqnarray}

For general $f_1,f_2\in W^{1,p}(\mathbb{R}^d)$, there exist sequences $\{f_{1,n}\}_{n=1}^{\infty},\{f_{2,n}\}_{n=1}^{\infty}
\subset C^1(\mathbb{R}^d)\bigcap W^{1,p}(\mathbb{R}^d)$, such that 
\begin{eqnarray*}
\lim_{n\rightarrow \infty}\sup\limits_{x\in\mathbb{R}^d}|f_{i,n}(x)-f_i(x)|\leq \lim_{n\rightarrow \infty}\|f_{i,n}-f_i\|_{W^{1,p}}=0,
~~\sup_{n}\|f_{i,n}\|_{W^{1,p}}\leq \|f_i\|_{W^{1,p}}, i=1,2.
\end{eqnarray*}
By Fatou's lemma,
\begin{eqnarray*}
&&\int_{\mathbb{R}^d}|f_1(X_{s,1}^t(x))-f_2(X_{s,2}^t(x))|^pdx=\int_{\mathbb{R}^d}\lim\limits_{n\rightarrow \infty}|f_{1,n}(X_{s,1}^t(x))-f_{2,n}(X_{s,2}^t(x))|^pdx\\\nonumber
&\leq&\liminf\limits_{n\rightarrow \infty}\int_{\mathbb{R}^d}|f_{1,n}(X_{s,1}^t(x))-f_{2,n}(X_{s,2}^t(x))|^pdx\\\nonumber
&\leq&C\|f_1-f_2\|_{L^p}^p+CT^pe^{CKT}\|f_2\|_{W^{1,p}}^p\sup\limits_{t\in[0,T]}\|v_1(t)-v_2(t)\|_{W^{1,p}}^p~a.s.
\end{eqnarray*}

Since $\nabla (f_l\circ X_{s,l}^t)(x)=\nabla f_l(X_{s,l}^t(x))\nabla X_{s,l}^t(x)$, by (\ref{eq4.11}), we obtain
\begin{eqnarray}\label{eq4.24}
&&|\nabla (f_1\circ X_{s,1}^t)(x)-\nabla (f_2\circ X_{s,2}^t)(x)|\\\nonumber
&\leq&Ce^{CKT}|\nabla f_1 (X_{s,1}^t(x))-\nabla f_2( X_{s,2}^t(x))|+C\|\nabla f_2\|_{L^\infty}|\nabla X_{s,1}^t(x)-\nabla X_{s,2}^t(x)|.
\end{eqnarray}
According to  (\ref{eq4.23}), we deduce that
\begin{eqnarray*}
&&\int_{\mathbb{R}^d}|\nabla f_1 (X_{s,1}^t(x))-\nabla f_2( X_{s,2}^t(x))|^pdx\\\nonumber
&\leq&C\|f_1-f_2\|_{W^{1,p}}^p+CT^pe^{CKT}\| f_2\|_{W^{2,p}}^p\sup\limits_{t\in[0,T]}\|v_1(t)-v_2(t)\|_{W^{1,p}}^p~a.s.
\end{eqnarray*}
Denote $\Gamma_s^t(x):= \nabla X_{s,1}^t(x)-\nabla X_{s,2}^t(x)$, by (\ref{eq4.11}), hence
\begin{eqnarray*}
|\Gamma_s^t(x)|\leq CK\int_t^s |\Gamma_r^t(x)| dr+Ce^{CKT}\int_t^s |\nabla v_1(T-r,X_{r,1}^t(x))-\nabla v_2(T-r, X_{r,2}^t(x))|dr.
\end{eqnarray*}
Substituting $f_1, f_2$ with $\nabla v_1, \nabla v_2$ in (\ref{eq4.23}), together with Gr\"onwall's inequality and H\"older's inequality, we have
\begin{eqnarray*}
\int_{\mathbb{R}^d}|\Gamma_s^t(x)|^pdx
\leq CT^pe^{CKT}(1+T^pK^p)\sup\limits_{t\in[0,T]}\|v_1(t)-v_2(t)\|_{W^{1,p}}^p ~a.s.
\end{eqnarray*}
Putting the above estimates into (\ref{eq4.24}), 
\begin{eqnarray}\label{eq4.25}
&&\int_{\mathbb{R}^d}|\nabla (f_1\circ X_{s,1}^t)(x)-\nabla (f_2\circ X_{s,2}^t)(x)|^pdx\\\nonumber
&\leq&Ce^{CKT}\|f_1-f_2\|_{W^{1,p}}^p+CT^pe^{CKT}(1+T^pK^p)\| f_2\|_{W^{2,p}}^p\sup\limits_{t\in[0,T]}\|v_1(t)-v_2(t)\|_{W^{1,p}}^p~a.s.
\end{eqnarray}

The procedure above implies that the estimates (\ref{eq4.18}) hold. Therefore, we have proved the assertion.

Similar to (\ref{eq4.16}), for $l=1,2, ~0\leq t\leq T$, the Feymann-Kac formula implies that
\begin{eqnarray}\label{eq4.26}
\Phi_l(t,x)=Y_{t,l}^t(x)=\mathbb{E}(m_{0,l}(X_{T,l}^t(x)))+\int_t^T\mathbb{E}J_{v_l}(T-s, X_{s,l}^t(x))ds.
\end{eqnarray}
 Changing the order of expectation and differential, together with Holder's inequality, we have 
\begin{eqnarray}\label{eq4.27}
&&\|\mathbb{E}J_{v_1}(T-s,X_{s,1}^t(\cdot))-\mathbb{E}J_{v_2}(T-s,X_{s,2}^t(\cdot))\|_{W^{k-1,p}}^p\\\nonumber
&\leq& \mathbb{E}\|J_{v_1}(T-s,X_{s,1}^t(\cdot))-J_{v_2}(T-s,X_{s,2}^t(\cdot))\|_{W^{k-1,p}}^p.
\end{eqnarray}
By (\ref{eq4.18}), we deduce that
\begin{eqnarray*}
&&\|J_{v_1}(T-s,X_{s,1}^t(\cdot))-J_{v_2}(T-s,X_{s,2}^t(\cdot))\|_{W^{k-1,p}}^p\\\nonumber
&\leq&Ce^{CKT}(1+TK)^{(k-2)p}\|J_{v_1}(T-s)-J_{v_2}(T-s)\|_{W^{k-1,p}}^p\\\nonumber
&&+CT^pe^{CKT}(1+TK)^{(k-1)p}\sup_{s\in[0,T]}\|J_{v_2}(T-s)\|_{W^{k,p}}^p\sup_{t\in[0,T]}\|v_1(t)-v_2(t)\|_{W^{k-1,p}}^p,~a.s.
\end{eqnarray*}
hence, applying Lemmas \ref{lemma4.2}, \ref{lemma4.4}, 
\begin{eqnarray*}
&&\|J_{v_1}(T-s,X_{s,1}^t(\cdot))-J_{v_2}(T-s,X_{s,2}^t(\cdot))\|_{W^{k-1,p}}^p\\\nonumber
&\leq&C(\alpha)K_0^pe^{CKT}(1+TK)^{kp}\sup_{t\in[0,T]}\|v_1(t)-v_2(t)\|_{W^{k-1,p}}^p\\\nonumber
&&+C(\alpha)K^pe^{CKT}(1+TK)^{(k-2)p}\sup_{t\in[0,T]}\|m_1(t)-m_2(t)\|_{W^{k-1,p}}^p.~~a.s.
\end{eqnarray*}
Putting this into (\ref{eq4.27}), 
\begin{eqnarray*}
&&\|\mathbb{E}J_{v_1}(T-s,X_{s,1}^t(\cdot))-\mathbb{E}J_{v_2}(T-s,X_{s,2}^t(\cdot))\|_{W^{k-1,p}}\\\nonumber
&\leq&C(\alpha)K_0e^{CKT}(1+TK)^{k}\sup_{t\in[0,T]}\|v_1(t)-v_2(t)\|_{W^{k-1,p}}\\\nonumber
&&+C(\alpha)Ke^{CKT}(1+TK)^{k-2}\sup_{t\in[0,T]}\|m_1(t)-m_2(t)\|_{W^{k-1,p}}~~a.s.
\end{eqnarray*}
Similarly, we can get the following estimate
\begin{eqnarray*}
&&\|\mathbb{E}(m_{0,1}(X_{T,1}^t(\cdot)))-\mathbb{E}(m_{0,2}(X_{T,2}^t(\cdot)))\|_{W^{k-1,p}}\\
&\leq&Ce^{CKT}(1+TK)^{k-2}\|m_{0,1}-m_{0,2}\|_{W^{k-1,p}}\\
&&+CTe^{CKT}(1+TK)^{k-1}\|m_{0,2}\|_{W^{k,p}}\sup_{t\in[0,T]}\|v_1(t)-v_2(t)\|_{W^{k-1,p}}.~a.s.
\end{eqnarray*}
Putting the above estimate into (\ref{eq4.26}), conclusion (\ref{eq4.17}) follows.
\end{proof}

\begin{corollary}
Let $v, ~\Phi(t)$ be as in Lemma \ref{lemma4.3}. Then $\Phi\in C([0,T];W^{k,p}(\mathbb{R}^d;\mathbb{R}^d))$ satisfies (\ref{eq4.6}).
\end{corollary}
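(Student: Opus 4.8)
The estimate (\ref{eq4.6}) is exactly the conclusion of Lemma \ref{lemma4.3}, so the only genuinely new content is the time-continuity $t\mapsto \Phi(t)\in W^{k,p}$. The plan is to quantify the increment $\Phi(t')-\Phi(t)$ for $t<t'$ directly from the Feynman--Kac representation (\ref{eq4.16}). Subtracting the two representations produces three contributions: the ``missing slab'' $\int_t^{t'}\mathbb{E}J_v(T-s,X_s^t(\cdot))\,ds$, and the two flow-difference terms $\mathbb{E}\big[m_0\circ X_T^{t'}-m_0\circ X_T^{t}\big]$ and $\int_{t'}^T\mathbb{E}\big[J_v(T-s)\circ X_s^{t'}-J_v(T-s)\circ X_s^{t}\big]\,ds$.

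The slab term is the easy one: by Lemma \ref{lemma4.2} and the assertion (\ref{eq4.7}), $\|\mathbb{E}J_v(T-s,X_s^t(\cdot))\|_{W^{k,p}}$ is bounded uniformly in $s$, so this contribution is $O(|t'-t|)$ in $W^{k,p}$ and poses no difficulty.

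For the two flow-difference terms I would exploit the cocycle property of the stochastic flow, namely $X_s^t=X_s^{t'}\circ X_{t'}^t$ for $t\le t'\le s$, which holds by pathwise uniqueness for (\ref{eq4.3}). For a fixed smooth $f$ (either $m_0$ or $J_v(T-s)$) this gives
\[
f\circ X_s^{t'}-f\circ X_s^{t}=g-g\circ X_{t'}^t,\qquad g:=f\circ X_s^{t'},
\]
where $g$ is controlled in $W^{k,p}$ (indeed in every $W^{j,p}$) uniformly in $s$ through the assertion (\ref{eq4.7}). The task thus reduces to showing that the short-time flow map $X_{t'}^t$ tends to the identity in the sense that $\mathbb{E}\|g-g\circ X_{t'}^t\|_{W^{k,p}}\to 0$ as $t'\downarrow t$. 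Writing $g(x)-g(X_{t'}^t(x))=-\int_0^1\nabla g(x+\theta(X_{t'}^t(x)-x))\cdot(X_{t'}^t(x)-x)\,d\theta$, using the displacement bound $\mathbb{E}|X_{t'}^t(x)-x|^p\lesssim(t'-t)^{p/2}$ coming from the Brownian and drift parts of (\ref{eq4.3}) coupled with the $L^p$-integrability of the derivatives of $g$, and treating the derivatives $\nabla^j(g\circ X_{t'}^t)$ up to order $k$ via the flow-derivative bounds (\ref{eq4.11}), (\ref{eq4.13}), one arrives at a modulus of continuity of the form $\|\Phi(t')-\Phi(t)\|_{W^{k,p}}\le C|t'-t|^{1/2}$.

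The step I expect to be the main obstacle is the top-order derivative in the flow-difference term: differentiating $g\circ X_{t'}^t$ exactly $k$ times and comparing with $\nabla^k g$ naively costs one derivative, since one needs $\nabla^{k+1}g\in L^p$ to control $\nabla^{k}(g\circ X_{t'}^t)-\nabla^{k}g$ in $L^p$. This is precisely why the corollary is placed in the class $\mathcal{F}(p,p',T)$, where $m$, and hence $v$, $J_v$ and $m_0$, are smooth with $\sup_t\|m(t)\|_{W^{j,p}}<\infty$ for every $j$; the extra derivative is therefore available and the loss-of-derivative issue evaporates. Finally, the uniform bound (\ref{eq4.6}) from Lemma \ref{lemma4.3} guarantees that $\Phi(t)$ stays in $W^{k,p}$ and satisfies the stated estimate, which together with the modulus of continuity yields $\Phi\in C([0,T];W^{k,p}(\mathbb{R}^d;\mathbb{R}^d))$.
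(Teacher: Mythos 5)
Your proof is correct in substance but follows a genuinely different route from the paper. The paper's argument is much shorter: it invokes Theorem 3.2 of Pardoux--Peng to assert that $\Phi(t,x)=Y_{T-t}^{T-t}(x)$ is a classical solution of the linear parabolic equation $\partial_t\Phi+v\cdot\nabla\Phi=\nu\Delta\Phi+J_v$, $\Phi(0)=m_0$, writes the increment as $\Phi(s)-\Phi(t)=\int_t^s\bigl(\nu\Delta\Phi(r)+J_v(r)-v(r)\cdot\nabla\Phi(r)\bigr)dr$, and bounds the integrand in $W^{k,p}$ by Lemmas \ref{lemma4.2} and \ref{lemma4.3}, obtaining the Lipschitz modulus $\|\Phi(s)-\Phi(t)\|_{W^{k,p}}\leq C(\alpha)(s-t)\sup_t\bigl(\|\Phi(t)\|_{W^{k+2,p}}+\|\Phi(t)\|_{W^{2,p}}^2+\|v(t)\|_{W^{k+2,p}}^2\bigr)$. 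You instead stay entirely at the level of the Feynman--Kac representation (\ref{eq4.16}), decompose the increment into a slab term and flow-difference terms, and control the latter via the cocycle property $X_s^t=X_s^{t'}\circ X_{t'}^t$ together with the displacement bound $\mathbb{E}|X_{t'}^t(x)-x|^p\lesssim (t'-t)^{p/2}$ and the flow-derivative estimates (\ref{eq4.11}), (\ref{eq4.13}); this yields a H\"older-$\tfrac12$ modulus rather than a Lipschitz one. Both arguments pay for the top-order term by spending extra derivatives, which is legitimate only because $m\in\mathcal{F}(p,p',T)$ is smooth with all Sobolev norms finite: the paper needs $\|\Phi\|_{W^{k+2,p}}$ (two extra derivatives, supplied by Lemma \ref{lemma4.3} applied at order $k+2$), while you need $\nabla^{k+1}g\in L^p$ (one extra derivative) --- a point you correctly flag and resolve. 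The paper's route is more economical but leans on the FBSDE--PDE equivalence; yours is self-contained within the stochastic representation and, as a minor technical point, requires a change-of-variables argument for the interpolated map $x\mapsto x+\theta(X_{t'}^t(x)-x)$ (justified for $t'-t$ small since $\nabla X_{t'}^t$ is then close to the identity), which you should make explicit if you write the argument out in full.
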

\begin{proof}
Since $m\in\mathcal{F}(p,p^\prime,T)$, by Theorem 3.2 in \cite{Pardoux-Peng:92}, 
$\Phi(t,x):=Y_{T-t}^{T-t}(x)\in C^1([0,T];C_b^2(\mathbb{R}^d;\mathbb{R}^d))$ is a solution of the  following parabolic partial differential equation (PDE):
\begin{eqnarray*}
\frac{\partial \Phi}{\partial t}+v\cdot \nabla \Phi=\nu\Delta \Phi+J_v, ~\Phi(0)=m_0.
\end{eqnarray*}
Therefore,
\begin{eqnarray*}
\Phi(s,x)-\Phi(t,x)=\int_t^s(\nu \Delta \Phi(r,x)+J_v(r,x)-v(r,x)\cdot \nabla \Phi(r,x))dr.
\end{eqnarray*}
By Lemmas \ref{lemma4.2} and \ref{lemma4.3},
\begin{eqnarray*}
\|\Phi(s)-\Phi(t)\|_{W^{k,p}}\leq C(\alpha)(s-t)\sup_{t\in[0,T]}(\|\Phi(t)\|_{W^{k+2,p}}+\|\Phi(t)\|^2_{W^{2,p}}+\|v(t)\|^2_{W^{k+2,p}}),
\end{eqnarray*}
we obtain $\Phi\in C([0,T];W^{k,p}(\mathbb{R}^d;\mathbb{R}^d))$.
\end{proof}

By Lemmas \ref{lemma4.3}, \ref{lemma4.5}, we can extend the map $\mathcal{P}_\nu$ from $\mathcal{F}(p,p^\prime,T)$ to $\mathcal{B}(m_0,T,p,k)$.
\begin{proposition}\label{prop4.7}
Suppose $T>0,~d<p<\infty,~ k>1$, $m_0\in W^{k,p}(\mathbb{R}^d;\mathbb{R}^d)$ satisfies $\nabla\cdot m_0=0$. Then $\mathcal{P}_{\nu}$ can be extended to be a map $\mathcal{P}_{\nu}:\mathcal{B}(m_0,T,p,k)\rightarrow \mathcal{B}(m_0,T,p,k)$ and for $m_1,m_2\in\mathcal{B}(m_0,T,p,k)$, estimates (\ref{eq4.6}), (\ref{eq4.17}) hold with $\Phi_1,\Phi_2$ replaced by
$\mathcal{P}_{\nu}(m_1),\mathcal{P}_{\nu}(m_2)$.
\end{proposition}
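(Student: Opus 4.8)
The plan is to define $\mathcal{P}_\nu$ on $\mathcal{B}(m_0,T,p,k)$ by continuous extension from $\mathcal{F}(p,p',T)$, using estimate (\ref{eq4.17}) to control differences and estimate (\ref{eq4.6}) to retain the $W^{k,p}$ bound. The starting point is a density statement: for every $m\in\mathcal{B}(m_0,T,p,k)$ I would produce a sequence $m_n\in\mathcal{F}(p,p',T)$ with $m_n\to m$ in $C([0,T];W^{k,p}(\mathbb{R}^d;\mathbb{R}^d))$. For fixed $t$ one mollifies $m(t)$ in space; since convolution commutes with the divergence the mollified field stays divergence free, and a truncation corrected by the Leray--Hodge projection (or a Bogovskii-type corrector) yields a compactly supported smooth divergence free field, all uniformly in $t\in[0,T]$ because $m$ is uniformly continuous into $W^{k,p}$. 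Any such $m_n\in C([0,T];C_c^\infty)$ with $\nabla\cdot m_n=0$ automatically lies in $\mathcal{F}(p,p',T)$, since on the compact interval all the Sobolev norms in the definition of $\mathcal{F}$ are finite. I would emphasize that the $W^{2,p'}$ control with $p'<\frac{d}{2}$ is exactly what makes the Newtonian potential $N$, hence $\mathcal{P}_\nu(m_n)$, well defined through (\ref{eq4.2}); this integrability is generally unavailable for $m\in\mathcal{B}$ with $p>d$, which is precisely why an extension by continuity is needed rather than a direct definition.

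Next I would transport the two estimates through the projection and then to the limit. Since $\mathbb{P}=\mathrm{Id}-\nabla\Delta^{-1}(\nabla\cdot)$ is a matrix of Riesz-type Calder\'on--Zygmund operators, it is bounded on $W^{j,p}$ for $1<p<\infty$ and commutes with derivatives (c.f. \cite{Stein:70}), so (\ref{eq4.6}) and (\ref{eq4.17}), proved for $\Phi=Y_t^t$, pass verbatim up to a constant to $\mathcal{P}_\nu(m_n)=\mathbb{P}\Phi_n(T-\cdot)$. Writing $m_{0,n}:=m_n(0)\to m_0$ in $W^{k,p}$ and $v_n=(I-\alpha^2\Delta)^{-1}m_n$ (the operator being smoothing, $\|v_n-v_{n'}\|_{W^{k-1,p}}\le C(\alpha)\|m_n-m_{n'}\|_{W^{k-1,p}}$), estimate (\ref{eq4.17}) applied to each pair $(m_n,m_{n'})$ shows that $\{\mathcal{P}_\nu(m_n)\}$ is Cauchy in $C([0,T];W^{k-1,p}(\mathbb{R}^d;\mathbb{R}^d))$. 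I would define $\mathcal{P}_\nu(m)$ as this limit; independence of the approximating sequence follows by interlacing two sequences and invoking (\ref{eq4.17}) once more.

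With the limit in hand, estimate (\ref{eq4.6}) gives $\sup_n\sup_t\|\mathcal{P}_\nu(m_n)(t)\|_{W^{k,p}}<\infty$, the quantities $K_0,K$ of the approximants converging to those of $m$. Since $W^{k,p}$ is reflexive for $1<p<\infty$, weak lower semicontinuity of the norm yields $\mathcal{P}_\nu(m)(t)\in W^{k,p}$ for each $t$ with the bound (\ref{eq4.6}); combining the strong $W^{k-1,p}$ convergence with the uniform $W^{k,p}$ bound gives weak continuity $\mathcal{P}_\nu(m)\in C_w([0,T];W^{k,p})$. Inequality (\ref{eq4.17}) is a relation between $W^{k-1,p}$ and $W^{k,p}$ norms of the data and is stable under these convergences, so it passes to the pair $\mathcal{P}_\nu(m_1),\mathcal{P}_\nu(m_2)$. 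The divergence free constraint and the initial condition are preserved: $\nabla\cdot(\cdot)=0$ is closed under $W^{k-1,p}$ limits and is built into $\mathbb{P}$, while $\mathcal{P}_\nu(m)(0)=\mathbb{P}Y_T^T=\mathbb{P}m_0=m_0$ because $X_T^T=\mathrm{id}$ and $m_0$ is already divergence free.

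The main obstacle is upgrading weak $W^{k,p}$ continuity to strong continuity, i.e. showing $\mathcal{P}_\nu(m)\in C([0,T];W^{k,p})$ rather than merely $C_w([0,T];W^{k,p})\cap C([0,T];W^{k-1,p})$: the uniform $W^{k,p}$ bound together with $W^{k-1,p}$ continuity do not imply it abstractly, and the time-modulus coming from the preceding corollary degenerates along the sequence because $\|\Phi_n\|_{W^{k+2,p}}$ is not uniformly bounded when $m\notin W^{k+2,p}$. I would resolve this by working directly with the limiting field. For $m\in\mathcal{B}(m_0,T,p,k)$ with $p>d$ and $k>1$ one has $v=(I-\alpha^2\Delta)^{-1}m\in W^{k+2,p}\hookrightarrow C^{k+1}$, so the flow $X_s^t$ of (\ref{eq4.3}) is well defined and $C^{k+1}$ in $x$, the nonlinearity $J_{v_n}$ converges in $C([0,T];W^{k-1,p})$ by Lemmas \ref{lemma4.2} and \ref{lemma4.4} (defining its extension $J_v$), and the Feynman--Kac formula (\ref{eq4.16}) holds in the limit. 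From this representation, using dominated convergence and the flow estimates (\ref{eq4.11})--(\ref{eq4.15}), the map $t\mapsto\|\mathcal{P}_\nu(m)(t)\|_{W^{k,p}}$ is continuous; weak continuity together with norm continuity in the reflexive space $W^{k,p}$ then gives strong continuity. This places $\mathcal{P}_\nu(m)$ in $\mathcal{B}(m_0,T,p,k)$ and, with the two estimates already transferred, completes the extension.
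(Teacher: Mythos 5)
Your proposal is correct and follows essentially the same route as the paper: approximate $m$ by smooth, compactly supported, divergence-free fields in $C([0,T];W^{k,p})$, push estimates (\ref{eq4.6}) and (\ref{eq4.17}) through the Leray--Hodge projection to obtain a Cauchy sequence in $C([0,T];W^{k-1,p})$ with a uniform $W^{k,p}$ bound, define $\mathcal{P}_\nu(m)$ as the limit (independent of the approximating sequence by (\ref{eq4.17})), and recover membership in $W^{k,p}$ by weak compactness. The only substantive differences are that you build the approximants by divergence-preserving mollification plus a corrector while the paper applies $\mathbb{P}$ to an arbitrary $C_c^\infty$ approximation, and that you explicitly flag and attempt to close the passage from weak to strong $C([0,T];W^{k,p})$ continuity via the Feynman--Kac representation --- a point the paper asserts from the weak subsequential limit without further argument.
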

\begin{proof}
Since $C_c^\infty(\mathbb{R}^d;\mathbb{R}^d)$ is dense in $W^{k,p}(\mathbb{R}^d;\mathbb{R}^d)$,  for every $m\in \mathcal{B}(m_0,T,p,k)$, we can find a sequence $\{\tilde{m}_n\}_{n=1}^{\infty}$ such that for every $n,~~ \tilde{m}_n\in C([0,T];C_c^\infty(\mathbb{R}^d;\mathbb{R}^d))$, $\tilde{v}_n:=(I-\alpha^2\Delta)^{-1}\tilde{m}_n$ ($\nabla\cdot\tilde{m}_n=0$ may not hold), and 
\begin{eqnarray*}
\lim_{n\rightarrow \infty}\sup_{t\in[0,T]}\|\tilde{m}_n(t)-m(t)\|_{W^{k,p}}=0.
\end{eqnarray*}
Denote $m_n(t):=\mathbb{P}\tilde{m}_n(t),v_n:=(I-\alpha^2\Delta)^{-1} m_n=\mathbb{P}\tilde{v}_n(t)$, we obtain $m_n\in \mathcal{F}(p,p^\prime,T)$. From \cite{Stein:70} we know that $\mathbb{P}$ is a singular integral operator and bounded in
$W^{k,p}(\mathbb{R}^d)$. Hence
\begin{eqnarray*}
&&\lim_{n\rightarrow\infty}\sup_{t\in[0,T]}\|m_n(t)-m(t)\|_{W^{k,p}}\\
&=&\lim_{n\rightarrow\infty}\sup_{t\in[0,T]}\|\mathbb{P}\tilde{m}_n(t)-\mathbb{P}m(t)\|_{W^{k,p}}\\
&\leq&C\lim_{n\rightarrow\infty}\sup_{t\in[0,T]}\|\tilde{m}_n(t)-m(t)\|_{W^{k,p}}=0,
\end{eqnarray*}
for $m_{0,n}:=m_n(0)$,
\begin{eqnarray}\label{eq4.28}
\lim_{n\rightarrow\infty}\|m_{0,n}-m_0\|_{W^{k,p}}=0.
\end{eqnarray}
Since $\mathbb{P}$ is a singular integral operator, by (\ref{eq4.6}) and (\ref{eq4.17}),
$\{\mathcal{P}_\nu(m_n)\}_{n=1}^{\infty}$ is a Cauchy sequence in $C([0,T];W^{k-1,p}(\mathbb{R}^d;\mathbb{R}^d))$ and there is a $\hat{m}\in C([0,T];W^{k-1,p}(\mathbb{R}^d;\mathbb{R}^d))$
such that,
\begin{eqnarray}\label{eq4.29}
&&\sup_n\sup_{t\in[0,T]}\|\mathcal{P}_\nu(m_n)(t)\|_{W^{k,p}}<\infty, \\
&&\lim_{n\rightarrow\infty}\sup_{t\in[0,T]}\|\mathcal{P}_\nu(m_n)(t)-\hat{m}(t)\|_{W^{k-1,p}}=0.\label{eq4.30}
\end{eqnarray}
From (\ref{eq4.30}) we know that $\nabla\cdot \hat{m}(t)=0$ for every $t$. By definition,
$\mathcal{P}_\nu(m_n)(0)=m_{0,n}$,
then according to (\ref{eq4.28}) and (\ref{eq4.30}), we obtain $\hat{m}(0)=m_0$. Due to (\ref{eq4.17}), the limit $\hat{m}$ we have obtained above is independent of the choice of approximation sequence $\{\tilde{m}_n\}$. Hence $\mathcal{P}_\nu(m):=\hat{m}$ is well defined.
From (\ref{eq4.29}), we know that there exists a subsequence $\{\mathcal{P}_\nu(m_{n_k})\}_{k=1}^{\infty}$ such that 
$$\mathcal{P}_\nu(m_{n_k})\rightarrow \hat{m}~~~\hbox{weakly}$$
and $\hat{m}\in C([0,T];W^{k,p}(\mathbb{R}^d;\mathbb{R}^d))$.
By (\ref{eq4.29}) and (\ref{eq4.30}), (\ref{eq4.17}) holds with $\Phi_1,\Phi_2$ replaced by
$\mathcal{P}_\nu(m_1),\mathcal{P}_\nu(m_2)$ for every $m_1,m_2\in\mathcal{B}(m_0,T,p,k)$. 
\end{proof}

\begin{theorem}\label{thm4.8}
For $d<p<\infty,~k>1$ and $m_0\in W^{k,p}(\mathbb{R}^d;\mathbb{R}^d)$ satisfying that $\nabla\cdot m_0=0$, there exists a constant $T_0$, which depends only on $\|m_0\|_{W^{k,p}}$, such that there is a unique fixed point
 $m$ of the map $\mathcal{P}_\nu$ in $\mathcal{B}(m_0,T_0,p,k)$.
\end{theorem}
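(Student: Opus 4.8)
The plan is to realize $m$ as the unique fixed point of the map $\mathcal{P}_\nu$ via Banach's fixed point theorem, working inside a ball of $C([0,T_0];W^{k,p})$ but metrized by the weaker $W^{k-1,p}$ norm. The tension to manage is exactly the one exposed by Proposition \ref{prop4.7}: it supplies a bound in the \emph{strong} norm $W^{k,p}$ (inequality (\ref{eq4.6})) but a Lipschitz estimate only in the \emph{weaker} norm $W^{k-1,p}$ (inequality (\ref{eq4.17})). Accordingly, let $C$ be the constant in front of $\|m_0\|_{W^{k,p}}$ in (\ref{eq4.6}), fix $R:=2C\|m_0\|_{W^{k,p}}$, and set
\[
\mathcal{S}:=\{m\in\mathcal{B}(m_0,T_0,p,k):\ \sup_{t\in[0,T_0]}\|m(t)\|_{W^{k,p}}\le R\}.
\]
On $\mathcal{S}$ one has $K_0=\sup_t\|m(t)\|_{W^{k,p}}\le R$ and, since $(I-\alpha^2\Delta)^{-1}$ is bounded on the Sobolev scale, $K=\sup_t\|v(t)\|_{W^{k,p}}\le C(\alpha)R$.

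\textbf{Self-map property.} First I would show $\mathcal{P}_\nu(\mathcal{S})\subseteq\mathcal{S}$ for $T_0$ small. Apply (\ref{eq4.6}) with $\Phi$ replaced by $\mathcal{P}_\nu(m)$: its right-hand side is $Ce^{CKT_0}\|m_0\|_{W^{k,p}}(1+T_0K)^{k-1}+C(\alpha)e^{CKT_0}T_0KK_0(1+T_0K)^{k-1}$. As $T_0\downarrow 0$ the first term tends to $C\|m_0\|_{W^{k,p}}=R/2$ while the second is $O(T_0)$; hence there is $T_0>0$, depending only on $R$ and thus only on $\|m_0\|_{W^{k,p}}$ (with $\alpha,\nu,p,k,d$ fixed), for which the whole expression is $\le R$. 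Since Proposition \ref{prop4.7} already places $\mathcal{P}_\nu(m)$ in $\mathcal{B}(m_0,T_0,p,k)$, this yields $\mathcal{P}_\nu(m)\in\mathcal{S}$.

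\textbf{Contraction.} Next I would apply (\ref{eq4.17}) to $m_1,m_2\in\mathcal{S}$. Because both carry the same initial datum $m_0$, the term $\|m_{0,1}-m_{0,2}\|_{W^{k-1,p}}$ vanishes, and using $\|v_1-v_2\|_{W^{k-1,p}}\le C(\alpha)\|m_1-m_2\|_{W^{k-1,p}}$ together with $K_0\le R$ and $K\le C(\alpha)R$, the estimate collapses to
\[
\sup_{t}\|\mathcal{P}_\nu(m_1)(t)-\mathcal{P}_\nu(m_2)(t)\|_{W^{k-1,p}}\le C(\alpha,R)\,T_0\,\sup_{t}\|m_1(t)-m_2(t)\|_{W^{k-1,p}}.
\]
The decisive feature is the explicit prefactor $T_0$: shrinking $T_0$ further (still depending only on $\|m_0\|_{W^{k,p}}$) forces $C(\alpha,R)T_0<\tfrac12$, so $\mathcal{P}_\nu$ is a contraction on $\mathcal{S}$ in the $C([0,T_0];W^{k-1,p})$ metric.

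\textbf{Main obstacle and conclusion.} The genuinely delicate point is that Banach's theorem demands completeness of $\mathcal{S}$ for the metric in which the contraction holds, whereas $\mathcal{S}$ is \emph{defined} by a $W^{k,p}$ bound but metrized by $W^{k-1,p}$. I would resolve this by the standard lower-semicontinuity argument: if $m_n\to m$ in $C([0,T_0];W^{k-1,p})$ with $\sup_{n,t}\|m_n(t)\|_{W^{k,p}}\le R$, then for each $t$ a subsequence of $m_n(t)$ converges weakly in $W^{k,p}$ (reflexive since $1<p<\infty$), its weak limit is $m(t)$, and $\|m(t)\|_{W^{k,p}}\le\liminf_n\|m_n(t)\|_{W^{k,p}}\le R$; the constraints $\nabla\cdot m=0$ and $m(0)=m_0$ also pass to the limit. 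Hence $\mathcal{S}$ is closed, thus complete, in $C([0,T_0];W^{k-1,p})$, and Banach's theorem produces a unique $m\in\mathcal{S}$ with $\mathcal{P}_\nu(m)=m$. For uniqueness across all of $\mathcal{B}(m_0,T_0,p,k)$, where a rival fixed point need not a priori obey the bound $R$, I would run a continuation argument: two fixed points have a common finite $W^{k,p}$ bound $R'$, so the same contraction estimate (with $R$ replaced by $R'$) forces them to agree on a short interval, and the set of times on which they coincide is nonempty, closed, and open, hence all of $[0,T_0]$.
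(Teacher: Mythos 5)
Your proposal is correct and follows essentially the same route as the paper: a small-time self-map bound on a $W^{k,p}$-ball via (\ref{eq4.6}), a contraction estimate in the weaker $C([0,T_0];W^{k-1,p})$ metric via (\ref{eq4.17}) with the initial-data term cancelling, and weak compactness/lower semicontinuity of the $W^{k,p}$-norm to place the limit back in the ball (the paper runs the Picard iteration explicitly rather than quoting Banach's theorem, and cites an exercise in Conway for precisely this upgrading step). The one place you go beyond the paper is the continuation argument for uniqueness of competitors not obeying the a priori bound $R$ --- the paper simply asserts uniqueness from its contraction estimate (\ref{eq4.31}), which strictly covers only fixed points inside the ball $\tilde{K}_0$ --- though the ``open'' step of your continuation implicitly uses that $\mathcal{P}_\nu$ can be restarted from the data $m(t^*)$ at time $t^*$ (a flow/causality property of the Feynman--Kac representation), which deserves an explicit word of justification.
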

\begin{proof}
Suppose that $m \in \mathcal{B}(m_0,T,p,k)$ with $\sup\limits_{t\in[0,T]}\|v(t)\|_{W^{k,p}}\leq K,\sup\limits_{t\in[0,T]}\|m(t)\|_{W^{k,p}}\leq K_0$, by Proposition \ref{prop4.7}, we have
\begin{eqnarray*}
\sup\limits_{t\in [0, T]}\|\mathcal{P}_\nu(m)(t)\|_{W^{k,p}}&\leq& Ce^{CKT}\|m_0\|_{W^{k,p}}(1+TK)^{k-1}+C(\alpha)e^{CKT}TKK_0(1+TK)^{k-1}.
\end{eqnarray*}
Let $T$ tend to $0$; the above bound in the right hand side tends to $C\|m_0\|_{W^{k,p}}$ and $C$ is independent of $K,K_0$. Hence, we can find constants $\tilde{K}_0>>
\|m_0\|_{W^{k,p}}$ and $0<T_1<1$ which only depends on $\|m_0\|_{W^{k,p}}$, such that for every $0<T\leq T_1$, $m\in \mathcal{B}(m_0,T,p,k)$ with 
$\sup\limits_{t\in[0,T]}\|v(t)\|_{W^{k,p}}\leq \tilde{K}_0$, $\sup\limits_{t\in[0,T]}\|m(t)\|_{W^{k,p}}\leq \tilde{K}_0$,
$$\sup\limits_{t\in[0,T]}\|\mathcal{P}_\nu(m)\|_{W^{k,p}}\leq \tilde{K}_0.$$
Fix such $\tilde{K}_0$, by Proposition \ref{prop4.7}, there is a constant $0<T_0\leq T_1$ which only depends on $\|m_0\|_{W^{k,p}}$, such that for each $m_1,m_2
\in\mathcal{B}(m_0,T_0,p,k)$ with $\sup\limits_{t\in[0,T],l=1,2}\|v_l(t)\|_{W^{k,p}}\leq \tilde{K}_0$,\\
$\sup\limits_{t\in[0,T],l=1,2}\|m_l(t)\|_{w^{k,p}}\leq \tilde{K}_0$,
\begin{eqnarray}\label{eq4.31}
&&\sup_{t\in[0,T_0]}\|\mathcal{P}_\nu(m_1)(t)-\mathcal{P}_\nu(m_2)(t)\|_{W^{k-1,p}}\\\nonumber
&\leq&\frac{1}{4}\sup_{t\in[0,T_0]}\|v_1(t)-v_2(t)\|_{W^{k-1,p}}
+\frac{1}{4}\sup_{t\in[0,T_0]}\|m_1(t)-m_2(t)\|_{W^{k-1,p}}\\\nonumber
&\leq&\frac{1}{4}\sup_{t\in[0,T_0]}\|(I-\alpha^2\Delta)^{-1}(m_1(t)-m_2(t))\|_{W^{k-1,p}}
+\frac{1}{4}\sup_{t\in[0,T_0]}\|m_1(t)-m_2(t)\|_{W^{k-1,p}}\\\nonumber
&\leq&\frac{1}{4}\sup_{t\in[0,T_0]}\|m_1(t)-m_2(t)\|_{W^{k-3,p}}
+\frac{1}{4}\sup_{t\in[0,T_0]}\|m_1(t)-m_2(t)\|_{W^{k-1,p}}\\\nonumber
&\leq&\frac{1}{2}\sup_{t\in[0,T_0]}\|m_1(t)-m_2(t)\|_{W^{k-1,p}},
\end{eqnarray}
For every $m\in \mathcal{B}(m_0,T_0,p,k)$, we denote $\|m\|_{W^{k,p},T}:=\sup\limits_{t\in[0,T]}(\|m(t)\|_{W^{k,p}})$. From the discussion above, we know that 
$\mathcal{P}_\nu$ can be viewed as a map $\mathcal{P}_\nu:\mathcal{B}(m_0,T_0,p,k,\tilde{K_0})\rightarrow
\mathcal{B}(m_0,T_0,p,k,\tilde{K_0})$, where
\begin{eqnarray*}
\mathcal{B}(m_0,T_0,p,k,\tilde{K_0}):=\{m\in \mathcal{B}(m_0,T_0,p,k): \|m \|_{W^{k,p},T_0}\leq \tilde{K}_0\}
\end{eqnarray*}
and $\mathcal{P}_\nu$ is contractive with respect to  $\|\cdot\|_{W^{k-1,p},T_0}$ norm.

Following Theorem 2.1 in \cite{Iyer:06}, we choose $m_1\in \mathcal{B}(m_0,T_0,p,k,\tilde{K}_0)$ (for example, $m_1(t):=m_0$ for every 
$t\in[0,T_0]$), and define $m_n:=\mathcal{P}_\nu(m_{n-1})$. By (\ref{eq4.31})
$$\|m_{n+1}-m_{n}\|_{W^{k-1,p},T_0}\leq\frac{1}{2}\|m_{n}-m_{n-1}\|_{W^{k-1,p},T_0},$$
then $\{m_n\}_{n=1}^{\infty}$ has a strong limit $m\in C([0,T_0];W^{k-1,p}(\mathbb{R}^d;\mathbb{R}^d))$ with respect to the norm $\|\cdot\|_{W^{k-1,p},T_0}$. Since $\sup\limits_n\|m_n\|_{W^{k,p},T_0}\leq \tilde{K}_0$, by the same procedure as in the proof of Proposition \ref{prop4.7} and by exercise 9 in page 128 of \cite{Conway:90}, we have $m\in C([0,T_0];W^{k,p}(\mathbb{R}^d;\mathbb{R}^d))$ and $\|m\|_{W^{k,p},T_0}\leq\tilde{K_0}$. By (\ref{eq4.31}),
$$\|\mathcal{P}_\nu(m_n)-\mathcal{P}_\nu(m)\|_{W^{k-1,p},T_0}\leq\frac{1}{2}\|m_{n}-m\|_{W^{k-1,p},T_0}$$
we obtain $\mathcal{P}_\nu(m)=m$. From (\ref{eq4.31}),
we can also obtain the uniqueness of the fixed point.
\end{proof}
\\
\\
\\
\bf Proof of Theorem \ref{thm4.1}. \rm

\noindent\bf Construction of a solution to equation (\ref{eq1.2}).\rm

 Since $m_0\in W^{k,p}(\mathbb{R}^d;\mathbb{R}^d)$, we can find a sequence $\{m_{0,n}\}\subset C_c^\infty(\mathbb{R}^d;\mathbb{R}^d)$, such that $\lim\limits_{n\rightarrow \infty}\|m_{0,n}-m_0\|_{W^{k,p}}=0$ and $\nabla\cdot m_{0,n}=0$. Using the iteration procedure in the proof of Theorem \ref{thm4.8}, we can find a constant $T_1$ independent of $n$,
such that for every $n$, there exist vectors $\{m_{n,l}\}_{l=1}^{\infty}\subset C([0,T_1]; C_c^\infty(\mathbb{R}^d;\mathbb{R}^d))$, $m_n\in C([0,T_1]; W^{k,p}(\mathbb{R}^d;\mathbb{R}^d))$, 
such that 
\begin{eqnarray}\label{eq4.32}
&&m_{n,l}(0)=m_{0,n}, ~~m_{n,l+1}=\mathcal{P}_{\nu}(m_{n,l}),~~\sup_{n,l}\sup_{t\in[0,T_1]}\|m_{n,l}(t)\|_{W^{k,p}}<\infty,\nonumber\\
&&\lim_{l\rightarrow\infty}\sup_{t\in[0,T_1]}\|m_{n,l}(t)-m_n(t)\|_{W^{k-1,p}}=0,
~~\mathcal{P}_\nu(m_n)=m_n.
\end{eqnarray}
Let $(X_{n,l},Y_{n,l},Z_{n,l})$ be the solution of (\ref{eq4.3}) with coefficients $v=u_{n,l}, u_{n,l}
=(I-\alpha^2\Delta)^{-1}m_{n,l}$ and initial condition $m_{n,l}(0)=m_{0,n}$. We denote $\Phi_{n,l}(t):=Y_{T_1-t,n,l}^{T_1-t}$ for $t\in[0,T_1]$. Since $u_{n,l}$ is regular enough, $\Phi_{n,l}$ is the unique classical solution of the following PDE,
\begin{eqnarray}\label{eq4.33}
\partial_t\Phi_{n,l}+u_{n,l}\cdot\nabla \Phi_{n,l}=\nu\Delta \Phi_{n,l}+J_{u_{n,l}},~~\Phi_{n,l}(0)=m_{0,n}.
\end{eqnarray}
Hence, it is a strong solution in the following sense, for every $t\in [0,T_1]$,
\begin{eqnarray}\label{eq4.34}
\Phi_{n,l}(t)=e^{t\nu\Delta}m_{0,n}-\int_0^te^{(t-s)\nu\Delta}(u_{n,l}(s)\cdot\nabla \Phi_{n,l}(s)-J_{u_{n,l}}(s))ds.
\end{eqnarray}
For $T_1$ independent of $n$, $l$ small enough, by (\ref{eq4.17}) we obtain,
\begin{eqnarray*}
&&\sup_{t\in[0,T_1]}\|\Phi_{n,l+1}(t)-\Phi_{n,l}(t)\|_{W^{k-1,p}}\\
&\leq& 2\sup_{t\in[0,T_1]}\|m_{n,l+1}-m_{n,l}\|_{W^{k-1,p}}+2\sup_{t\in[0,T_1]}\|u_{n,l+1}-u_{n,l}\|_{W^{k-1,p}}\\
&\leq& 2\sup_{t\in[0,T_1]}\|m_{n,l+1}-m_{n,l}\|_{W^{k-1,p}}+2\sup_{t\in[0,T_1]}\|m_{n,l+1}-m_{n,l}\|_{W^{k-3,p}}.
\end{eqnarray*}
By (\ref{eq4.32}) and the same argument in the proof of Proposition \ref{prop4.7}, there is a $\Phi_n\in C([0,T_1];W^{k,p}(\mathbb{R}^d;\mathbb{R}^d))$ such that
\begin{eqnarray*}
\sup_n\sup_{t\in[0,T_1]}\|\Phi_n(t)\|_{W^{k,p}}<\infty,~~
\lim_{n\rightarrow\infty}\sup_{t\in[0,T_1]}\|\Phi_{n,l}(t)-\Phi_n(t)\|_{W^{k-1,p}}=0.
\end{eqnarray*}
Let $l\rightarrow \infty$ in (\ref{eq4.34}); we get, for every $t\in [0,T_1]$,
\begin{eqnarray}\label{eq4.35}
\Phi_{n}(t)=e^{t\nu\Delta}m_{0,n}-\int_0^te^{(t-s)\nu\Delta}(u_{n}(s)\cdot\nabla \Phi_{n}(s)-J_{u_{n}}(s))ds.
\end{eqnarray}

Since $\nabla\cdot m_{n,l}(t)=0$, by definition of $F_{u_{n,l}}$ and$J_{u_{n,l}}$, we have
\begin{eqnarray*}
&&\nabla \cdot F_{u_{n,l}}(t)=\nabla\cdot(\nabla NG_{u_{n,l}}(t))=\Delta NG_{u_{n,l}}(t)=G_{u_{n,l}}(t)\\
&&\nabla \cdot J_{u_{n,l}}(t)=\nabla \cdot F_{u_{n,l}}(t)+\alpha^2\nabla \cdot (\nabla u_{n,l}^T\Delta u_{n,l})=\sum_{i,j=1}^d \partial_iu_{n,l}^j(t)\partial_jm_{n,l}^i(t).
\end{eqnarray*}
Denote
\begin{eqnarray*}
H_{u_{n,l},\Phi_{n,l}}(t):=\sum_{i,j=1}^d \partial_iu_{n,l}^j(t)\partial_j(\Phi_{n,l}^i(t)-m_{n,l}^i(t)).
\end{eqnarray*}
Let $h_{n,l}(t):=\nabla\cdot \Phi_{n,l}(t)$; taking the divergence in (\ref{eq4.33}), we obtain, for every $t\in [0,T_1]$,
\begin{eqnarray*}
\partial_th_{n,l}+u_{n,l}\cdot\nabla h_{n,l}=\nu\Delta h_{n,l}-H_{u_{n,l},\Phi_{n,l}},~~h_{n,l}(0)=0.
\end{eqnarray*}
Applying It\^o's formula to $h_{n,l}(T_1-s,X^t_{s,n,l}(x))$ and taking the expectation, we obtain
\begin{eqnarray*}
h_{n,l}(T_1-t,x)=-\int_t^{T_1}\mathbb{E}(H_{u_{n,l},\Phi_{n,l}}(T_1-s,X_{s,n,l}^t))ds,
\end{eqnarray*}
so we have
\begin{eqnarray*}
\|h_{n,l}(t)\|_{L^p}^p\leq C\int_0^{t}\|H_{u_{n,l},\Phi_{n,l}}(s)\|_{L^p}^pds.
\end{eqnarray*}
By (\ref{eq4.32}), let $l\rightarrow \infty$, 
\begin{eqnarray}\label{eq4.36}
\|h_{n}(t)\|_{L^p}^p\leq C\int_0^{t}\|H_{u_{n},\Phi_{n}}(s)\|_{L^p}^pds,
\end{eqnarray}
where $h_n(t):=\nabla\cdot \Phi_n(t)$ and 
\begin{eqnarray*}
H_{u_{n},\Phi_{n}}(t):=\sum_{i,j=1}^d \partial_iu_{n}^j(t)\partial_j(\Phi_{n}^i(t)-m_{n}^i(t)).
\end{eqnarray*}
Since for every $m \in C_c^\infty(\mathbb{R}^d;\mathbb{R}^d)$, the Leray-Hodge projection has the expression $m-\mathbb{P}m =\nabla N(\nabla\cdot m)$, we obtain, for every $p>1$,
\begin{eqnarray*}
\|\nabla(m-\mathbb{P} m)\|_{L^p}=\|\nabla^2N(\nabla\cdot m)\|_{L^p}
\leq C\|\nabla\cdot m\|_{L^p}.
\end{eqnarray*}
Since $\mathbb{P}(\Phi_{n,l}(t))=m_{n,l+1}(t)$, we have $\mathbb{P}(\Phi_n(t))=m_n(t)$. Therefore,
\begin{eqnarray*}
\|\nabla(m_n(t)-\Phi_n(t))\|_{L^p}\leq C\|\nabla\cdot \Phi_n(t)\|_{L^p}=C\|h_n(t)\|_{L^p},
\end{eqnarray*}
which yields that 
\begin{eqnarray}\label{eq4.37}
\|H_{u_{n},\Phi_{n}}\|_{L^p}\leq CK\|h_n(t)\|_{L^p}
\end{eqnarray}
where $K:=\sup\limits_{n}\sup\limits_{t\in[0,T]}(\|\nabla u_n(t)\|_{L^\infty}).$ By (\ref{eq4.36}), (\ref{eq4.37}) and Gr\"onwall's inequality, we deduce that $\|h_n(t)\|_{L^p}=0$ for every $t\in [0,T_1]$. Hence, $\nabla \cdot \Phi_n(t)=0$
and $\Phi_n(t)=\mathbb{P}\Phi_n(t)=m_n(t)$.

Since $\mathcal{P}_{\nu}(m_n)=m_n$, by (\ref{eq4.17}), there is a $0<T_0\leq T_1$ 
independent of $n$ and a vector $m\in C([0,T_0];W^{k,p}(\mathbb{R}^d;\mathbb{R}^d))$,
such that 
\begin{eqnarray*}
\lim_{n\rightarrow \infty} \sup_{t\in[0,T_0]}\|m_n(t)-m(t)\|_{W^{k-1,p}}=0;
\end{eqnarray*}
then let $n\rightarrow \infty$ in (\ref{eq4.35}) we obtain, for every $t\in [0,T_0]$,
\begin{eqnarray*}
m(t)=e^{t\nu\Delta}m_{0}-\int_0^te^{(t-s)\nu\Delta}(u(s)\cdot\nabla m(s)-J_{u}(s))ds.
\end{eqnarray*}
Therefore, $m\in C([0,T_0];W^{k,p}(\mathbb{R}^d;\mathbb{R}^d))$ is the strong solution of (\ref{eq1.2}).
\\
\\
\noindent\bf Uniqueness.\rm

Suppose $m\in C([0,T_0];W^{k,p}(\mathbb{R}^d;\mathbb{R}^d))$ is a strong solution of (\ref{eq1.2})
and $T_0$ is small enough. 
Under such regularity condition, the FBSDE (\ref{eq4.3}) with coefficient 
$u=(I-\alpha^2\Delta)^{-1}m$ and initial condition $m(0,x)=m_0(x)$ has a unique solution $(X,Y,Z)$.
Denote  $\Phi(t):=Y_{T_0-t}^{T_0-t}$ for $t\in [0,T_0]$. By (\ref{eq4.17}) and the approximation procedure above,  $\Phi\in C([0,T_0];W^{k,p}(\mathbb{R}^d;\mathbb{R}^d))$ is the strong solution of the following 
(linear) PDE,
\begin{eqnarray}\label{eq4.38}
\partial_t\Phi+u\cdot\nabla \Phi=\nu\Delta \Phi+J_{u},~~\Phi(0)=m_{0}.
\end{eqnarray}
On the other hand, since $m$ is a strong solution of (\ref{eq1.2}), $m$ is also a strong solution of (\ref{eq4.38}). By the uniqueness of the strong solution of the linear PDE (\ref{eq4.38}) in such function space, we must have $\Phi(t)=m(t)$, so $m=\Phi=\mathcal{P}_\nu(m)$, hence it is a fixed point of $\mathcal {P}_\nu$ in 
$\mathcal{B}(m_0,T_0,p,k)$ and by Theorem \ref{thm4.8} it is unique.

\begin{remark}
From the proceeding of our proof, we can directly show that Theorem \ref{thm4.1} holds for the d-dimensional $(d\geq 3)$ Leray-$\alpha$ equation, that is, equation (\ref{eq1.2}) without the term 
$\alpha^2\nabla u^T\Delta u$.
\end{remark}

\bf Acknowledgements: \rm  
This research was supported by China Scholarship Council.

\providecommand{\bysame}{\leavevmode\hbox to3em{\hrulefill}\thinspace}

\end{document}